\theoremstyle{plain}
\newtheorem{thm}{Theorem}[section]
\newtheorem{prop}[thm]{Proposition}
\newtheorem{lem}[thm]{Lemma}
\newtheorem{cor}[thm]{Corollary}
\theoremstyle{definition}
\newtheorem{rmk}[thm]{Remark}
\newtheorem{exa}[thm]{Example}
\newenvironment{Acknowledgement}{%
\bigskip\noindent\textit{Acknowledgement.}}
\title[Monotonicity of second order difference equations]{
  Monotonicity of solutions to second order linear difference equations with constant coefficients
}
\author[Y. Goto]{Yoshiaki Goto}
\address[Goto]{
Otaru University of Commerce, 
3-5-21, Midori, Otaru, Hokkaido, 047-8501, JAPAN
}
\email{goto@res.otaru-uc.ac.jp}
\author[G. Shibukawa]{Genki Shibukawa}
\address[Shibukawa]{
Kitami Institute of Technology,
165, Koen-cho, Kitami, Hokkaido, 090-8507, JAPAN
}
\email{g-shibukawa@mail.kitami-it.ac.jp}
\keywords{
Fibonacci numbers; symmetric polynomials; inequalities%
}
\subjclass[2020]{
  05E05, 
  11B39, 
  39A06.
}
\date{\today}
\begin{document}
\begin{abstract}
  We describe some monotone properties of solutions to 
  second order linear difference equations with real constant coefficients. 
  As an application, we give a characterization of the Fibonacci numbers.  
\end{abstract}

\maketitle

\section{Introduction}
The Fibonacci numbers
$$
F_{0}:=0, \quad F_{1}:=1, \quad F_{n+2}-F_{n+1}-F_{n}=0
$$
can be regarded as the special values of the complete homogeneous symmetric polynomials 
$$
h_{n}(x,y)
   :=
   \frac{x^{n+1}-y^{n+1}}{x-y}
   =
   \sum_{j=0}^{n}x^{k}y^{n-k}, \quad 
   h_{-1}(x,y):=0, \quad 
   h_{0}(x,y):=1
$$
in two variables with 
$$
x=-2\cos{\left(\frac{4\pi }{5}\right)}=\frac{1+\sqrt{5}}{2}, \quad 
y=-2\cos{\left(\frac{2\pi }{5}\right)}=\frac{1-\sqrt{5}}{2},
$$
that is, 
$$
F_{n+1}
   =
   h_{n}\left(-2\cos{\left(\frac{4\pi }{5}\right)}, -2\cos{\left(\frac{2\pi }{5}\right)}\right)
   =
   \frac{1}{\sqrt{5}}
   \left[\left(\frac{1+\sqrt{5}}{2}\right)^{n+1}-\left(\frac{1-\sqrt{5}}{2}\right)^{n+1}\right] .
$$
For the Fibonacci numbers, we easily check that 
the following inequalities which we call monotone properties hold: 
\begin{itemize}
\item \textbf{monotone property 1} (positive monotone non-decreasing property):
\begin{align}
\label{eq:monotonic 1}
F_{0}=0<  F_{n+1}\leq F_{n+2};
\end{align}
\item \textbf{monotone property 2}:
\begin{align}
\label{eq:monotonic 2}
\left|\frac{1+\sqrt{5}}{2}-\frac{F_{n+1}}{F_{n}}\right|
   \geq \left|\frac{1+\sqrt{5}}{2}-\frac{F_{n+2}}{F_{n+1}}\right| ;
\end{align}
\item \textbf{monotone property 3}:
\begin{align}
\label{eq:monotonic 3}
\left|F_{n}\frac{1+\sqrt{5}}{2}-F_{n+1}\right|
   \geq \left|F_{n+1}\frac{1+\sqrt{5}}{2}-F_{n+2}\right|.
\end{align}
\end{itemize}

The Lucas numbers
$$
L_{0}:=2, \quad L_{1}:=1, \quad L_{n+2}-L_{n+1}-L_{n}=0
$$
can be regarded as the special values of the power sum symmetric polynomials 
$$
p_{n}(x,y)
   :=
   x^{n}+y^{n}, \quad 
   p_{0}(x,y)=2
$$
in two variables: 
$$
L_{n}
   =
   p_{n}\left(-2\cos{\left(\frac{4\pi }{5}\right)}, -2\cos{\left(\frac{2\pi }{5}\right)}\right)
   =
   \left(\frac{1+\sqrt{5}}{2}\right)^{n}+\left(\frac{1-\sqrt{5}}{2}\right)^{n} .
$$
For the Lucas numbers, since we have 
\begin{align*}
& L_{0}=2 > L_{1}=1 < L_{2}=3 < L_{3}=4 < \cdots , \\
& \left|\frac{1+\sqrt{5}}{2}-\frac{1}{2}\right|=1.11\cdots 
   \leq \left|\frac{1+\sqrt{5}}{2}-3\right|=1.38\cdots 
   \geq \left|\frac{1+\sqrt{5}}{2}-\frac{4}{3}\right|=0.28\cdots 
   \geq \cdots ,
\end{align*}
the monotone properties 1 and 2 do not hold. 
However, because of 
\begin{align*}
\left|L_{n}\frac{1+\sqrt{5}}{2}-L_{n+1}\right|
   =
   \left|\frac{1-\sqrt{5}}{2}\right|^{n}
   \left|\frac{1+\sqrt{5}}{2}-\frac{1-\sqrt{5}}{2}\right|
   =
   \sqrt{5}
   \left|\frac{1-\sqrt{5}}{2}\right|^{n}
   =
   \sqrt{5}
   (0.61\cdots )^{n} ,
\end{align*}
the monotone property 3 holds. 

In this paper, we study the monotone properties 1--3 of a solution to 
a second order linear difference equation with real constant coefficients. 
For a first order linear difference equation, the problem is easy. 
Let us consider
\begin{align*}
a_{0}:=c_{0}\in \mathbb{R}\setminus \{0\}, \quad  
a_{n+1}-aa_{n}=0 ,\quad a \in \mathbb{R}\setminus \{0\} .
\end{align*}
For the solution 
$$
a_{n}=c_{0}a^{n},
$$
the monotone property 1
is equivalent to the condition $c_{0}>0$ and $a\geq 1$. 
The monotone properties 2 and 3 always hold, since 
$$
\left|a-\frac{a_{n+1}}{a_{n}}\right|
   =
   0, \quad 
\left|a_{n}a-a_{n+1}\right|=0 .
$$

Now we consider 
a second order linear difference equation with real constant coefficients, 
with initial values $(0,0)\not= (c_{0},c_{1})\in \mathbb{R}^{2}$ or $(0,0)\not=(c_{-1},c_{0}) \in \mathbb{R}^{2}$: 
\begin{align}
\label{eq:two order diff eq}
a_{0}:=c_{0}, \quad a_{1}:=c_{1}, \quad 
a_{n+2}-aa_{n+1}+ba_{n}=0, \quad a,b \in \mathbb{R}, \quad ab\not=0 .
\end{align}
In general, conditions for 
the monotone properties 1--3 of its solution are not so simple. 
As an example, we consider the following two difference equations with initial values $c_{-1}:=0$, $c_{0}:=1$, 
which are given as the special values of the complete homogeneous symmetric polynomials $h_{n}(x,y)$. 

\begin{exa}
\label{exa:count exam1}
The solution to (\ref{eq:two order diff eq}) 
for $a=1$, $b=1/4$, $a_{-1}:=0$, $a_{0}:=1$
is
$$
a_{n}=(n+1)2^{-n} .
$$
Because of 
$$
a_{n}\geq a_{n+1}, 
$$
the monotone property 1 does not hold. 
On the other hand, by 
$$
\left|\frac{1}{2}-\frac{a_{n+1}}{a_{n}}\right|
   =
   \frac{1}{2}\frac{1}{n+1} ,\qquad 
\left|a_{n}\frac{1}{2}-a_{n+1}\right|
   =
   \frac{1}{2^{n+1}} ,
$$
the monotone properties 2 and 3 hold. 
\end{exa}

\begin{exa}
The solution to (\ref{eq:two order diff eq})
for $a=1$, $b=-3$, $a_{-1}:=0$, $a_{0}:=1$
is
$$
a_{n}
   =
   \frac{1}{\sqrt{13}}\left[\left(\frac{1+\sqrt{13}}{2}\right)^{n+1}-\left(\frac{1-\sqrt{13}}{2}\right)^{n+1}\right].
$$
By the definition (\ref{eq:two order diff eq}), it satisfies the monotone property 1. 
However, the monotone properties 2 and 3 do not hold, since we have
\begin{align*}
&\left|\frac{1+\sqrt{13}}{2}-1\right|=1.30\cdots 
   \leq \left|\frac{1+\sqrt{13}}{2}-4\right|=1.69\cdots 
   \geq \left|\frac{1+\sqrt{13}}{2}-\frac{7}{4}\right|=0.55\cdots
   \geq \cdots ,
\\
&\left|\frac{1+\sqrt{13}}{2}-1\right|=1.30\cdots 
   \leq \left|\frac{1+\sqrt{13}}{2}-4\right|=1.69\cdots 
   \leq \left|4\frac{1+\sqrt{13}}{2}-7\right|=2.21\cdots 
   \leq \cdots .  
\end{align*}
\end{exa}

Further, as in the case of the Lucas numbers, even if the monotone properties 1 and 2 
do not hold, sometimes the properties hold for sufficiently large $n$. 

Let $a_{n}$ be the solution to the difference equation (\ref{eq:two order diff eq}), and let 
$$
\alpha _{\pm }:=\frac{a\pm \sqrt{a^{2}-4b}}{2}
$$
be the roots of the characteristic polynomial $x^{2}-ax+b$. 
Here, when $a^{2}-4b<0$, we set $\sqrt{a^{2}-4b}=\sqrt{-1}\sqrt{4b-a^{2}}$. 
Note that the assumption $ab\not=0$ implies 
\begin{align}
\label{eq:eq char root nonzero}
&\alpha _{\pm}\not=0,\\
\label{eq:eq cond char root}
&|\alpha _{+}|=|\alpha _{-}| \quad \Leftrightarrow \quad \alpha _{+}=\alpha _{-} .
\end{align}
We also use notations $\alpha$, $\beta$ for the characteristic roots 
(i.e., $\{\alpha ,\beta\}=\{\alpha _{+},\alpha _{-}\}$) 
so that 
$$
|\alpha |\geq |\beta | .
$$

We first give conditions under which 
the following inequalities (\ref{eq:monot1 0}) and (\ref{eq:monot2}) hold for sufficiently large $n$. 
\begin{thm}
\label{thm:monot1-2}
There exists a non-negative integer $n_{0}$ such that 
\begin{align}
\label{eq:monot1 0}
a_{n}\leq a_{n+1} \quad (n_{0}\leq n), 
\end{align}
if and only if $a^{2}-4b\geq 0$ and one of the following two conditions hold: 
\begin{align}
\label{eq:monotonic cond 1}
& 1\not=\alpha _{+}>0,\quad a>0, \quad (\alpha _{+}-1)(c_{1}-c_{0}\alpha _{-})> 0;
\end{align}
or 
\begin{align}
\label{eq:monotonic cond 2}
& \alpha _{+}=1, \quad a_{0}\leq a_{1}\leq a_{2}.
\end{align}
\end{thm}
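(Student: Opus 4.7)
The plan is to work from the explicit solution of \eqref{eq:two order diff eq} and analyze the asymptotics of $\Delta_n := a_{n+1}-a_n$ case by case on the sign of the discriminant $D := a^{2}-4b$.

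\textbf{Complex roots.} If $D<0$, I would write $\alpha_{\pm} = r e^{\pm i\theta}$ with $r>0$ and $\theta \in (0,\pi)$, so that the general real solution takes the form $a_n = M r^n \cos(n\theta - \phi)$ with $M>0$ (because $(c_{0},c_{1})\neq (0,0)$), and a routine trigonometric identity yields $\Delta_n = M' r^n \cos(n\theta - \phi')$ for suitable $M'>0$ and $\phi'$. Since $\theta/\pi \in (0,1)$, the cosine factor takes both signs for infinitely many $n$---by periodicity when $\theta/\pi \in \mathbb{Q}$, by equidistribution of $n\theta \bmod 2\pi$ otherwise---so no such $n_{0}$ can exist. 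This rules out $D<0$ and yields the necessary condition $D \geq 0$.

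\textbf{Real distinct roots.} For $D>0$, I would write $a_n = A\alpha_{+}^n + B\alpha_{-}^n$ with $A = (c_{1}-c_{0}\alpha_{-})/(\alpha_{+}-\alpha_{-})$, giving
\begin{equation*}
\Delta_n = A(\alpha_{+}-1)\alpha_{+}^n + B(\alpha_{-}-1)\alpha_{-}^n.
\end{equation*}
For the sufficiency of \eqref{eq:monotonic cond 1} when $\alpha_{+}\neq 1$, note that $a>0$ together with $\alpha_{+}>\alpha_{-}$ forces $\alpha_{+}>0$ and $|\alpha_{+}|>|\alpha_{-}|$, so $\alpha_{+}^{-n}\Delta_n \to A(\alpha_{+}-1)$; the inequality in \eqref{eq:monotonic cond 1} is equivalent to $A(\alpha_{+}-1)(\alpha_{+}-\alpha_{-})>0$ with $\alpha_{+}-\alpha_{-}>0$, so this limit is strictly positive and $\Delta_n>0$ eventually. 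For necessity, I would argue by contrapositive via the same asymptotics: if $\alpha_{+}<0$, the dominant factor $\alpha_{+}^n$ alternates in sign; if $a\leq 0$, then $|\alpha_{-}|\geq |\alpha_{+}|$ and the dominant term $\alpha_{-}^n$ alternates or is cancelled by an alternating rival; and if $A(\alpha_{+}-1)<0$, the leading asymptotic of $\Delta_n$ is eventually negative. The boundary $D=0$ I would treat in parallel via $a_n = (A+Bn)\lambda^n$ with $\lambda = a/2$; here $\Delta_n \sim B(\lambda-1)n\lambda^n$ and the identity $(\alpha_{+}-1)(c_{1}-c_{0}\alpha_{-}) = B\lambda(\lambda-1)$ confirms that \eqref{eq:monotonic cond 1} governs this case too.

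\textbf{The case $\alpha_{+}=1$ and the main obstacle.} When $\alpha_{+}=1$ (still with $D>0$), the first term of $\Delta_n$ vanishes and $\Delta_n = B(\alpha_{-}-1)\alpha_{-}^n$ with $|\alpha_{-}|<1$. Because $|\Delta_n|$ decays geometrically, I can reduce eventual monotonicity to monotonicity from $n=0$; a short case split on the sign of $\alpha_{-}$ then shows that this is equivalent to the three-term test $a_{0}\leq a_{1}\leq a_{2}$, which is \eqref{eq:monotonic cond 2}. The hardest part, I expect, is the necessity direction at these boundary regimes---$\alpha_{+}=1$ and degenerate initial data in which the leading asymptotic of $\Delta_n$ drops out---since the sign of $\Delta_n$ is then controlled by the subleading term $\alpha_{-}^n$, whose sign alternates when $\alpha_{-}<0$. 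Matching each such subcase precisely against the disjunction of \eqref{eq:monotonic cond 1} and \eqref{eq:monotonic cond 2} is what will demand the most careful bookkeeping.
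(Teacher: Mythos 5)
Your overall strategy is the same as the paper's: write $a_n$ in closed form, study the sign of $\Delta_n=a_{n+1}-a_n$ asymptotically, and split into cases according to the sign of $a^{2}-4b$ and the position of $\alpha_{+}$ relative to $0$ and $1$. (You do add an explicit treatment of $a^{2}-4b<0$, which the paper's proof of this theorem does not spell out, leaning instead on the oscillation argument used for Lemma \ref{lem:limit exa}.) The sufficiency half of your argument is sound.

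The genuine gap is in the necessity direction, precisely at the ``degenerate initial data'' you defer to the end. When $a<0$ and $b<0$ one has $\alpha_{+}>0>\alpha_{-}$ with $|\alpha_{-}|>\alpha_{+}$, so the dominant term of $\Delta_n=A(\alpha_{+}-1)\alpha_{+}^{n}+B(\alpha_{-}-1)\alpha_{-}^{n}$ is indeed the alternating one; but if $B=0$ (i.e.\ $c_{1}=c_{0}\alpha_{+}$) the surviving ``rival'' $A(\alpha_{+}-1)\alpha_{+}^{n}$ does \emph{not} alternate, contrary to your claim. Concretely, take $a=-1$, $b=-6$, $c_{0}=1$, $c_{1}=2$: then $\alpha_{+}=2$, $\alpha_{-}=-3$, and $a_n=2^{n}$ is strictly increasing, yet $a<0$ and $\alpha_{+}\neq 1$, so neither \eqref{eq:monotonic cond 1} nor \eqref{eq:monotonic cond 2} holds. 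The same phenomenon occurs at $a^{2}-4b=0$ when $c_{1}=c_{0}\alpha_{+}$ (e.g.\ $a=b=4$, $c_{0}=1$, $c_{1}=2$ gives $a_n=2^{n}$ again, while $(\alpha_{+}-1)(c_{1}-c_{0}\alpha_{-})=0$ violates the strict inequality). So the ``careful bookkeeping'' you postpone cannot be completed as described: on this degenerate locus the stated equivalence itself fails, and the paper's own proof has the identical blind spot (its cases (a2) and (b2) assert without justification that the relevant expression takes both signs, and its case (a3) claims an ``if and only if'' with $c_{1}-c_{0}\alpha_{-}>0$ that fails when $c_{1}-c_{0}\alpha_{-}=0$ and $c_{0}>0$). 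A correct necessity argument must first separate out the one-dimensional solutions $a_n=c_{0}\alpha_{\pm}^{n}$ before reading off signs from the leading asymptotic. Separately, a small slip: when $\alpha_{+}=1$ and $a^{2}-4b>0$ you assert $|\alpha_{-}|<1$, which need not hold (only $\alpha_{-}<1$ is guaranteed); the clean route there is the identity $\Delta_{n+1}=(a-1)\Delta_{n}=\alpha_{-}\Delta_{n}$, as in the paper's cases (c1) and (c2).
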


\begin{thm}
\label{thm:monot2-2}
There exists a non-negative integer $n_{0}$ such that 
\begin{align}
\label{eq:monot2}
\left|\alpha -\frac{a_{n+1}}{a_{n}}\right| \geq \left|\alpha -\frac{a_{n+2}}{a_{n+1}}\right| 
\quad (n_{0}\leq n),
\end{align}
if and only if $a^{2}-4b\geq 0$ holds. 
\end{thm}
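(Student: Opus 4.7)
The plan is to reduce the inequality in both directions to an explicit closed form for $\alpha - a_{n+1}/a_n$ in each of the three regimes determined by the sign of $a^2 - 4b$, and then read off whether the resulting sequence is eventually non-increasing.

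For the sufficiency, I would assume $a^2 - 4b \geq 0$ and split on whether the characteristic roots are distinct. When $a^2 - 4b > 0$, the roots $\alpha, \beta$ are real and the condition $a \neq 0$ forces $|\alpha| > |\beta|$ strictly. Writing $a_n = A\alpha^n + B\beta^n$, a direct manipulation gives
\[
\alpha - \frac{a_{n+1}}{a_n} = \frac{(\alpha - \beta)\,B\beta^n}{A\alpha^n + B\beta^n}.
\]
The degenerate subcases $A = 0$ and $B = 0$ make this constant or zero respectively; when $AB \neq 0$ the ratio of consecutive absolute values tends to $|\beta/\alpha| < 1$, giving eventual strict decrease. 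The double root case $a^2 - 4b = 0$ is similar: using $a_n = (A + Bn)\alpha^n$ with $\alpha = a/2$ one obtains $\alpha - a_{n+1}/a_n = -B\alpha/(A + Bn)$, which is either identically zero or eventually strictly decreasing in absolute value.

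For the necessity I would argue the contrapositive. Assume $a^2 - 4b < 0$; then $\alpha = re^{i\theta}$ with $r > 0$ and $\theta \in (0, \pi)$, and $a = 2r\cos\theta \neq 0$ rules out $\theta = \pi/2$. Writing the real solution as $a_n = 2\rho\, r^n \cos(n\theta + \psi)$, a short trigonometric calculation should yield, whenever $a_n \neq 0$,
\[
\alpha - \frac{a_{n+1}}{a_n} = r\sin\theta\,\bigl(\tan(n\theta + \psi) + i\bigr), \qquad \left|\alpha - \frac{a_{n+1}}{a_n}\right| = \frac{r|\sin\theta|}{|\cos(n\theta + \psi)|}.
\]
Thus the property amounts to $|\cos(n\theta + \psi)|$ being eventually non-decreasing. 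I would rule this out in two subcases: if $\theta/\pi$ is irrational, then $n\theta + \psi$ is dense modulo $\pi$ by Kronecker, so $|\cos(n\theta + \psi)|$ is arbitrarily close to both $0$ and $1$ infinitely often and cannot be monotone; if $\theta/\pi$ is rational, the sequence is periodic, and I would check by comparing $|\cos\psi|$, $|\cos(\theta + \psi)|$, and $|\cos(2\theta + \psi)|$ that, together with $\theta \in (0, \pi) \setminus \{\pi/2\}$, these cannot all coincide, so the periodic sequence is non-constant and hence not eventually monotone.

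The main obstacle I anticipate is the rational-angle subcase of the necessity direction: the sequence is periodic rather than erratic, so one must genuinely use $a \neq 0$ (i.e., $\theta \neq \pi/2$) to exclude the single degenerate configuration that would make $|\cos(n\theta + \psi)|$ constant and therefore spuriously satisfy the inequality. Everything else reduces to the closed form and simple geometric decay.
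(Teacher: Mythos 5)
Your proposal is correct, and it reaches the same destination by a slightly different road. For sufficiency the paper does not compute the error term $\alpha-a_{n+1}/a_{n}$ in closed form as you do; instead it uses the one-step multiplicative identity
\begin{align*}
\left|\alpha -\frac{a_{n+2}}{a_{n+1}}\right|
   =
   \left|\frac{b}{\alpha }\frac{a_{n}}{a_{n+1}}\right|\left|\alpha -\frac{a_{n+1}}{a_{n}}\right|,
\end{align*}
which turns the monotonicity into the single condition $|\beta a_{n}|\leq |a_{n+1}|$ and then invokes the convergence $a_{n+1}/a_{n}\to \alpha$ (with the degenerate case $c_{1}=c_{0}\beta$ and the double-root case treated separately). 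Your closed form $(\alpha-\beta)B\beta^{n}/(A\alpha^{n}+B\beta^{n})$ is just the telescoped version of the same identity, so the two sufficiency arguments are essentially interchangeable; the identity has the small advantage of localizing the comparison to one step, while your formula makes the geometric decay visible at a glance. The real divergence is in the necessity direction: the paper leans on its Lemma on the non-convergence of $a_{n+1}/a_{n}$ when $a^{2}-4b<0$ (proved there via periodicity and Kronecker's theorem) and is rather terse about how non-convergence of the ratio defeats eventual monotonicity of $\left|\alpha-a_{n+1}/a_{n}\right|$, whereas you derive the explicit value $r|\sin\theta|/|\cos(n\theta+\psi)|$ and kill eventual monotonicity of $|\cos(n\theta+\psi)|$ directly, using equidistribution in the irrational case and the three-consecutive-terms computation (which is exactly where $a\neq 0$, i.e.\ $\theta\neq\pi/2$, enters) in the rational case. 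That argument is sound and, if anything, more self-contained than the paper's. The only loose end, which the paper shares, is that when $a^{2}-4b<0$ the denominators $a_{n}$ may vanish for infinitely many $n$ in the rational-angle case, so one should note that the inequality then fails simply by being undefined; this costs one sentence and does not affect the conclusion.
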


We next discuss three monotone properties 1, 2 and 3 (respectively, Theorem \ref{thm:monot1}, Theorem \ref{thm:monot2} and Theorem \ref{thm:monot3}).
\begin{thm}
\label{thm:monot1}
For a non-negative integer $k$, 
the inequality 
\begin{align}
\label{eq:monot1-k}
a_{n}\leq a_{n+1} \quad (n\geq k-1)
\end{align}
holds if and only if $a^{2}-4b\geq 0$ and one of the following two conditions hold: 
\begin{align}
\label{eq:cond1}
a_{k-1}\leq a_{k}\leq a_{k+1}, \quad 1\not=\alpha _{+}>0,\quad a>0, \quad (\alpha _{+}-1)(c_{1}-c_{0}\alpha _{-})> 0,
\end{align}
or 
\begin{align}
\label{eq:cond1 another}
a_{k-1}\leq a_{k}\leq a_{k+1}, \quad \alpha _{+}=1.
\end{align}
In particular, the inequality $a_{n}\leq a_{n+1}$ holds for any non-negative integer $n$ 
if and only if $a^{2}-4b\geq 0$ and one of the following two conditions hold: 
\begin{align}
\label{eq:cond1 monot}
a_{-1}\leq a_{0}\leq a_{1}, \quad 1\not=\alpha _{+}>0,\quad a>0, \quad (\alpha _{+}-1)(c_{1}-c_{0}\alpha _{-})> 0,
\end{align}
or
\begin{align}
\label{eq:cond1 another monot}
a_{-1}\leq a_{0}\leq a_{1}, \quad \alpha _{+}=1.
\end{align}
\end{thm}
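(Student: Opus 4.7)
The plan is to derive Theorem~\ref{thm:monot1} from Theorem~\ref{thm:monot1-2} by analyzing the first-difference sequence $d_n := a_{n+1}-a_n$. Since $d_n$ satisfies the same recurrence $d_{n+2}-ad_{n+1}+bd_n=0$, it admits the explicit form $d_n = \tilde{A}\alpha_+^n + \tilde{B}\alpha_-^n$ with $\tilde{A} = A(\alpha_+-1)$ and $\tilde{B} = B(\alpha_--1)$ in the simple-root case, where $A,B$ come from $a_n = A\alpha_+^n + B\alpha_-^n$. The identity $A(\alpha_+-\alpha_-) = c_1 - c_0\alpha_-$ translates the sign hypothesis $(\alpha_+-1)(c_1-c_0\alpha_-) > 0$ of (\ref{eq:cond1}) into $\tilde{A} > 0$.

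Necessity is nearly immediate: monotonicity on $n\geq k-1$ is, in particular, eventual monotonicity, so Theorem~\ref{thm:monot1-2} supplies $a^2-4b\geq 0$ together with (\ref{eq:monotonic cond 1}) or (\ref{eq:monotonic cond 2}); the chain $a_{k-1}\leq a_k\leq a_{k+1}$ is read directly from the hypothesis, so (\ref{eq:cond1}) or (\ref{eq:cond1 another}) holds.

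For sufficiency under (\ref{eq:cond1}), the hypotheses $a>0$ and $a^2-4b\geq 0$ force $\alpha = \alpha_+ > |\alpha_-|$ in the simple-root case. Setting $r := \alpha_-/\alpha_+ \in (-1,1)$, the normalised quantity $g_n := d_n/\alpha_+^n = \tilde{A} + \tilde{B}r^n$ tends to $\tilde{A} > 0$. When $r\geq 0$, $g_n$ is monotone in $n$, so $d_{k-1}\geq 0$ alone propagates to $d_n\geq 0$ for every $n\geq k-1$; when $r<0$, $g_n$ restricted to each parity class (even/odd) is monotone toward $\tilde{A}$, and the two hypotheses $d_{k-1}\geq 0$, $d_k\geq 0$ are exactly what is needed to anchor both parities. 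In the double-root case $\alpha_+=\alpha_-=\alpha$, a direct computation gives $d_n = \alpha^n B(\alpha-1)(n+C)$ for an appropriate constant $C$; since $\alpha>0$ and $B(\alpha-1)>0$ follows from the sign hypothesis, non-negativity of $d_n$ reduces to a single linear condition $n\geq n^\ast$ that is propagated by $d_{k-1}\geq 0$.

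For sufficiency under (\ref{eq:cond1 another}), $\alpha_+ = 1$ makes $d_n = B(\alpha_--1)\alpha_-^n$ a pure exponential (or $d_n = B$ in the double-root case $\alpha_- = 1$). If $\alpha_->0$ the sign of $d_n$ is constant and $d_{k-1}\geq 0$ suffices; if $\alpha_-<0$ the values of $d_n$ alternate in sign, so the pair $d_{k-1}\geq 0$, $d_k\geq 0$ forces $B=0$ and the sequence is constant; the double-root case is trivial. The \emph{In particular} assertion is the specialization $k=0$. The only genuinely delicate step, and the reason two consecutive base inequalities are demanded rather than one, is the alternating-sign case $\alpha_-<0$ under (\ref{eq:cond1}): there $d_n$ lacks a uniform sign, and a single anchor would control only one parity class of $g_n$.
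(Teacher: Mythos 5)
Your proposal is correct and follows essentially the same route as the paper: necessity by reading the base inequalities off the hypothesis and invoking Theorem~\ref{thm:monot1-2} for the eventual behaviour, and sufficiency by writing $d_n=a_{n+1}-a_n$ explicitly and propagating the two anchors $d_{k-1}\geq 0$, $d_k\geq 0$ through a parity argument on the normalised differences, which is just a compact repackaging of the paper's cases (A1)--(B2). A minor bonus of your write-up is that it explicitly treats the double-root subcase $a^2-4b=0$ under (\ref{eq:cond1}), which the paper's four cases (relying on the formula with $\sqrt{a^2-4b}$ in the denominator) pass over silently.
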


As a corollary of Theorem \ref{thm:monot1}, we obtain 
a condition equivalent to 
the monotone property 1 
for the special values of the complete homogeneous symmetric polynomials. 
\begin{cor}
\label{cor:monot1}
Let the initial values be $a_{-1}:=0$, $a_{0}\not=0$. 
The inequality 
\begin{align}
\label{eq:monot1}
a_{-1}=0< a_{0}\leq a_{n}\leq a_{n+1}
\end{align}
holds for any non-negative integer $n$ if and only if
we have $a^{2}-4b \geq 0$ and 
\begin{align}
\label{eq:cond1 cor}
a_{0}>0, \quad 
a\geq 1, \quad 
\alpha _{+}\geq 1.
\end{align}
\end{cor}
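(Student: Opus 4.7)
The plan is to derive this as a direct specialization of Theorem \ref{thm:monot1} with $k=0$. First I would observe that the chain (\ref{eq:monot1}) is equivalent to the conjunction of $a_0>0$ with $a_n\le a_{n+1}$ for all $n\ge 0$; together with $a_{-1}=0<a_0$, this is the same as $a_n\le a_{n+1}$ for all $n\ge -1$, which is precisely the hypothesis of Theorem \ref{thm:monot1} at $k=0$.

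The key algebraic input is Vieta's formula $\alpha_+ + \alpha_- = a$ combined with the initial data. From $a_{-1}=0$ the recurrence gives $c_1 = a_1 = a\cdot a_0$, so
\[
c_1 - c_0\alpha_- \;=\; a_0(a-\alpha_-) \;=\; a_0\,\alpha_+ .
\]
This simplification collapses the factor appearing in (\ref{eq:cond1 monot}) into something whose sign is controlled by $\alpha_+$ and $a_0$ alone.

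Next I would translate the two alternatives of Theorem \ref{thm:monot1} under the standing assumption $a_0>0$. The initial condition $a_{-1}\le a_0\le a_1$ becomes $0\le a_0\le a\,a_0$, which is equivalent to $a\ge 1$. In case (\ref{eq:cond1 monot}), the product $(\alpha_+-1)(c_1-c_0\alpha_-)=a_0\alpha_+(\alpha_+-1)$ is positive iff $\alpha_+>1$ (using $a_0>0$ and the stipulation $\alpha_+>0$); the remaining requirements $\alpha_+\ne 1$, $\alpha_+>0$, and $a>0$ are then automatic from $\alpha_+>1$ and $a\ge 1$. Case (\ref{eq:cond1 another monot}) is simply $\alpha_+=1$. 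Under $a_0>0$ and $a\ge 1$, the disjunction of the two cases is exactly $\alpha_+\ge 1$, yielding (\ref{eq:cond1 cor}). No real obstacle is expected; the only step demanding genuine care is the Vieta reduction $c_1-c_0\alpha_-=a_0\alpha_+$, after which the two cases of Theorem \ref{thm:monot1} merge cleanly into the single inequality $\alpha_+\ge 1$.
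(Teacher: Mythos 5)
Your proposal is correct and follows essentially the same route as the paper: apply Theorem \ref{thm:monot1} with $k=0$, use $a_1=aa_0$ and the Vieta reduction $c_1-c_0\alpha_-=a_0\alpha_+$ to rewrite the sign condition as $a_0\alpha_+(\alpha_+-1)>0$, and merge the two cases into $\alpha_+\ge 1$. No gaps.
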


\begin{thm}
\label{thm:monot2}
Let the initial values be $a_{-1}:=0$, $a_{0}\not=0$. 
The inequality 
\begin{align}
\label{eq:monot2-2}
\left|\alpha -\frac{a_{n+1}}{a_{n}}\right| \geq \left|\alpha -\frac{a_{n+2}}{a_{n+1}}\right|
\end{align}
holds for any non-negative integer $n$ if and only if
we have $a^{2}-4b \geq 0$ and 
\begin{align}
\label{eq:cond2}
|\alpha +\beta |=|a|\geq |\beta |.
\end{align}
\end{thm}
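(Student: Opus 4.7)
My plan is to exploit the closed form that the initial condition $a_{-1}=0$ yields and reduce the monotonicity inequality to an elementary condition on the ratio $r := \alpha/\beta$ of characteristic roots.

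For the necessity of $a^{2}-4b\geq 0$, I would invoke Theorem \ref{thm:monot2-2} directly: if (\ref{eq:monot2-2}) holds for all $n\geq 0$, it holds in particular for all sufficiently large $n$, so $a^{2}-4b\geq 0$. From now on assume this, so $\alpha,\beta\in\mathbb{R}$ with $|\alpha|\geq|\beta|>0$ (the second inequality because $b=\alpha\beta\neq 0$). The degenerate subcase $\alpha=\beta$ is easy to dispose of separately: from $a_{n}=a_{0}(n+1)\alpha^{n}$ one computes $|\alpha-a_{n+1}/a_{n}|=|\alpha|/(n+1)$, so (\ref{eq:monot2-2}) holds for every $n$; and the condition $|a|\geq|\beta|$ reduces to $2|\alpha|\geq|\alpha|$, which is automatic.

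Assume now $\alpha\neq\beta$, so $a_{n}=a_{0}(\alpha^{n+1}-\beta^{n+1})/(\alpha-\beta)$. A direct calculation gives
\[
\alpha-\frac{a_{n+1}}{a_{n}}=\frac{\beta^{n+1}(\beta-\alpha)}{\alpha^{n+1}-\beta^{n+1}},
\]
and after canceling the nonzero common factor $|\beta|^{n+1}|\alpha-\beta|$, the inequality (\ref{eq:monot2-2}) becomes, for each $n\geq 0$,
\[
|\alpha^{n+2}-\beta^{n+2}|\geq|\beta|\cdot|\alpha^{n+1}-\beta^{n+1}|,
\]
equivalently, with $r:=\alpha/\beta\in\mathbb{R}$ and $|r|\geq 1$, to $|r^{n+2}-1|\geq|r^{n+1}-1|$.

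The final step is to characterize when this last inequality holds for every $n\geq 0$. If $r\geq 1$ it is immediate, since both sides are nonnegative and $r^{n+2}\geq r^{n+1}\geq 1$. The case $r=-1$ is ruled out by $a=\alpha+\beta\neq 0$. Otherwise $r<-1$; set $s:=|r|>1$. A parity split shows the inequality is automatic for odd $n$ but reduces to $s^{n+1}(s-1)\geq 2$ for even $n$. Since $s^{n+1}(s-1)$ is strictly increasing in $n$, the binding constraint is $n=0$: $s(s-1)\geq 2$, i.e., $s\geq 2$. Hence (\ref{eq:monot2-2}) holds for all $n\geq 0$ iff $r\geq 1$ or $r\leq -2$, and a short sign analysis shows that under $|r|\geq 1$ this is equivalent to $|r+1|\geq 1$, hence (multiplying by $|\beta|$) to $|\alpha+\beta|\geq|\beta|$, which is exactly $|a|\geq|\beta|$. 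The main subtle point is the parity split when $r<-1$ together with the observation that $n=0$ is the binding case; everything else is routine algebra.
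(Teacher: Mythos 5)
Your proof is correct. The skeleton matches the paper's: necessity of $a^{2}-4b\geq 0$ is delegated to Theorem \ref{thm:monot2-2}, and the core inequality is reduced to $|\alpha^{n+2}-\beta^{n+2}|\geq|\beta|\,|\alpha^{n+1}-\beta^{n+1}|$ (the paper reaches the same reduction via the Riccati-type identity $\bigl|\alpha-\tfrac{a_{n+2}}{a_{n+1}}\bigr|=\bigl|\tfrac{b}{\alpha}\tfrac{a_{n}}{a_{n+1}}\bigr|\bigl|\alpha-\tfrac{a_{n+1}}{a_{n}}\bigr|$, you by computing $\alpha-\tfrac{a_{n+1}}{a_{n}}$ directly from the closed form; this is a cosmetic difference). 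Where you genuinely diverge is the endgame: the paper splits into five cases according to the signs of $\alpha$ and $\beta$ and verifies the inequality in each by an ad hoc algebraic rearrangement (e.g.\ the identity (\ref{eq:key step}) in the mixed-sign case), and then obtains the necessity of $|a|\geq|\beta|$ separately by substituting $n=0$. You instead normalize by $r=\alpha/\beta$, observe the inequality is automatic for $r>1$ and for odd $n$ when $r<-1$, and isolate the single binding constraint $s^{n+1}(s-1)\geq 2$ at $n=0$, which delivers sufficiency and necessity of $|a|\geq|\beta|$ in one stroke. Your organization is tighter and makes transparent \emph{why} $n=0$ is the critical index, at the cost of the small extra bookkeeping of translating $r\geq 1$ or $r\leq -2$ back into $|\alpha+\beta|\geq|\beta|$; the paper's case-by-case version stays closer to the quantities $\alpha,\beta$ appearing in the statement.
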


\begin{thm}
\label{thm:monot3}
The inequality 
\begin{align}
\label{eq:monot3}
\left|a_{n}\alpha -a_{n+1}\right| \geq \left|a_{n+1}\alpha -a_{n+2}\right|
\end{align}
holds for any non-negative integer $n$ if and only if
\begin{align}
\label{eq:cond3}
|\beta |\leq 1.
\end{align}
\end{thm}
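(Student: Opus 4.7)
The plan is to reduce Theorem~\ref{thm:monot3} to a single one-step recursion for the scalar sequence $d_n := |a_n\alpha - a_{n+1}|$. Starting from the recurrence (\ref{eq:two order diff eq}) together with Vieta's relations $\alpha+\beta=a$ and $\alpha\beta=b$, I would compute
\[
a_{n+1}\alpha - a_{n+2} = a_{n+1}\alpha - (\alpha+\beta)a_{n+1} + \alpha\beta\, a_n = -\beta\bigl(a_{n+1} - \alpha a_n\bigr),
\]
so that $d_{n+1} = |\beta|\, d_n$ for every $n\ge 0$. This identity requires no hypothesis on the discriminant $a^2-4b$ and no restriction on the initial data; once it is in hand the theorem is nearly immediate.

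For the \emph{if} direction, assuming $|\beta|\le 1$ we get $d_{n+1}\le d_n$ for every $n\ge 0$, which is exactly (\ref{eq:monot3}).

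For the \emph{only if} direction I would argue by contraposition. If $|\beta|>1$ and there exists some $n_0\ge 0$ with $d_{n_0}\neq 0$, then iterating the identity yields $d_{n_0+k} = |\beta|^k\, d_{n_0}$, which strictly exceeds $d_{n_0+k-1}$; this violates (\ref{eq:monot3}) at $n = n_0 + k - 1$. Hence if (\ref{eq:monot3}) holds for all $n\ge 0$ and $|\beta|>1$, then $d_n=0$ for every $n\ge 0$, which forces $a_{n+1} = \alpha a_n$ and therefore $a_n = c_0\alpha^n$ (a pure $\alpha$-solution).

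The main subtlety, and the only real obstacle, is this degenerate case: if $a_n = c_0\alpha^n$ identically then both sides of (\ref{eq:monot3}) vanish for every $n$, so the inequality is trivially satisfied for any value of $|\beta|$. I would handle it by noting that such initial data $(c_0,c_1)$ with $c_1 = c_0\alpha$ produce a solution lying entirely in the $\alpha$-eigenline and give only the tautology $0\ge 0$; one therefore reads the ``only if'' direction modulo this pure $\alpha^n$-branch, or excludes it at the outset. Apart from this, the proof is essentially the two-line computation above.
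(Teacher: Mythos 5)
Your proof is correct and in substance the same as the paper's: the paper computes the closed form $\left|a_{n}\alpha -a_{n+1}\right| = |c_{1}-c_{0}\alpha|\,|\beta|^{n}$ from the explicit solution in terms of $h_{n}(\alpha,\beta)$, which is exactly the iterated version of your one-step identity $a_{n+1}\alpha - a_{n+2} = -\beta\left(a_{n+1}-\alpha a_{n}\right)$, so both arguments hinge on the same geometric decay of $d_{n}$. The one place you go beyond the paper is the degenerate branch $c_{1}=c_{0}\alpha$: the paper's ``only if'' step divides out $|c_{1}-c_{0}\alpha|\,|\beta|^{n}$ without comment, silently assuming it is nonzero, whereas you correctly observe that on the pure $\alpha$-eigenline both sides of (\ref{eq:monot3}) vanish identically and the ``only if'' direction holds only modulo that case (consistent with Remark \ref{rmk:Pisot}, which quantifies over all initial values precisely to avoid this). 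So your derivation is, if anything, slightly more careful and more elementary, since it needs neither the discriminant condition nor the $h_{n}$ machinery.
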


This paper is arranged as follows. 
In Section \ref{section:preliminaries}, 
we give some lemmas which are used in the proof of main results. 
In Section \ref{section:proof}, 
we prove our main results. 
In Section \ref{section:concluding},
we determine a class of special values of $h_n(x,y)$ which satisfy
the monotone properties 1--3, 
and give a characterization of the Fibonacci numbers.

\section{Preliminaries}\label{section:preliminaries}
Throughout this paper, $a_{n}$ is the solution to the difference equation (\ref{eq:two order diff eq}). 
\begin{lem}
\label{thm:charact roots ineq}
We assume $a^{2}-4b\geq 0$. We have  
\begin{align*}
|\alpha _{+}|-|\alpha _{-}|
   &=
   \begin{cases}
   \sqrt{a^{2}-4b} & (a>0, b>0) \\
   -\sqrt{a^{2}-4b} & (a<0, b>0) \\
   a & (b<0)
   \end{cases}.
\end{align*}
Especially, we obtain 
\begin{align}
\label{eq:alpha ineq}
a>0, \, a^{2}-4b>0 \quad &\Leftrightarrow \quad |\alpha _{+}|=\alpha _{+}>|\alpha _{-}|, \\
\label{eq:alpha ineq2}
a<0, \, a^{2}-4b>0 \quad &\Leftrightarrow \quad |\alpha _{+}|<|\alpha _{-}|.
\end{align}
\end{lem}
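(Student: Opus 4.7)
The plan is to argue by Vieta's formulas, $\alpha_+ + \alpha_- = a$ and $\alpha_+ \alpha_- = b$, and to split into the three cases appearing in the statement. In each case, the signs of $\alpha_\pm$ are forced, so the absolute values unwind and the difference $|\alpha_+| - |\alpha_-|$ can be computed directly from $\alpha_\pm = (a \pm \sqrt{a^2-4b})/2$.

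First I would dispose of the case $b > 0$. Since $\alpha_+ \alpha_- = b > 0$, the two (real) roots have the same sign, and this common sign is that of $\alpha_+ + \alpha_- = a$. If $a > 0$, then $\alpha_+ \ge \alpha_- > 0$, so
\[
|\alpha_+| - |\alpha_-| = \alpha_+ - \alpha_- = \sqrt{a^2-4b}.
\]
If $a < 0$, then $\alpha_- \le \alpha_+ < 0$, and a symmetric calculation gives $|\alpha_+|-|\alpha_-| = -(\alpha_+ - \alpha_-) = -\sqrt{a^2-4b}$. (The borderline $a = 0$ forces $b \le 0$, so it does not occur here.) Next, for $b < 0$, the inequality $a^2 - 4b > a^2$ gives $\sqrt{a^2-4b} > |a|$, which forces $\alpha_+ > 0$ and $\alpha_- < 0$; hence
\[
|\alpha_+| - |\alpha_-| = \alpha_+ + \alpha_- = a.
\]
This establishes the three-case formula.

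For the equivalence \eqref{eq:alpha ineq}, the forward direction is immediate from the cases above: if $a>0$ and $a^2-4b>0$, then either $b>0$ (giving $|\alpha_+|-|\alpha_-|=\sqrt{a^2-4b}>0$ with $\alpha_+ > 0$) or $b<0$ (giving $|\alpha_+|-|\alpha_-|=a>0$ with $\alpha_+>0$). For the converse, $|\alpha_+| > |\alpha_-|$ already excludes $\alpha_+ = \alpha_-$, so $a^2-4b>0$. Moreover, writing $a = \alpha_+ + \alpha_-$ and using $\alpha_+ = |\alpha_+| > |\alpha_-| \ge -\alpha_-$ (valid since $\alpha_- \neq 0$ by \eqref{eq:eq char root nonzero}), we get $a > 0$. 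The equivalence \eqref{eq:alpha ineq2} is handled symmetrically: $|\alpha_+| < |\alpha_-|$ with $\alpha_\pm$ real forces $b > 0$ (otherwise case 3 would give $|\alpha_+| \ge |\alpha_-|$) and then $a < 0$ by the $b>0$ case analysis, and conversely.

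No step is really an obstacle here; the only point that requires a moment of care is ruling out the boundary possibility $a = 0$ in each case, which is handled by combining the assumption $ab \neq 0$ with the sign of $b$.
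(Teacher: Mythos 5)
Your derivation of the three-case formula is correct and is essentially the paper's own argument: in each sign regime you pin down the signs of $\alpha_{\pm}$ and let the absolute values unwind. (The paper fixes the signs in the $b>0$ cases by comparing $a$ with $\sqrt{a^{2}-4b}$ directly, whereas you read them off from $\alpha_{+}\alpha_{-}=b$ and $\alpha_{+}+\alpha_{-}=a$; this is cosmetic. You also write out the proof of the two equivalences, which the paper leaves implicit.)

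There is, however, one step that fails: in the converse direction of \eqref{eq:alpha ineq2} you assert that $|\alpha_{+}|<|\alpha_{-}|$ forces $b>0$ ``otherwise case 3 would give $|\alpha_{+}|\geq|\alpha_{-}|$''. That is false: case 3 gives $|\alpha_{+}|-|\alpha_{-}|=a$, which is negative whenever $a<0$. For instance, with $a=b=-1$ one has $\alpha_{\pm}=(-1\pm\sqrt{5})/2$, so $|\alpha_{+}|=(\sqrt{5}-1)/2<(\sqrt{5}+1)/2=|\alpha_{-}|$ even though $b<0$. Consequently your subsequent inference ``and then $a<0$ by the $b>0$ case analysis'' does not cover all cases. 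The conclusion is still true and the repair is immediate: from $|\alpha_{+}|<|\alpha_{-}|$ you correctly get $\alpha_{+}\neq\alpha_{-}$, hence $a^{2}-4b>0$; and since $ab\neq 0$ excludes $a=0$, it suffices to note that every line of the displayed formula with $a>0$ yields $|\alpha_{+}|-|\alpha_{-}|>0$ (namely $\sqrt{a^{2}-4b}>0$ when $b>0$ and $a>0$ when $b<0$), contradicting the hypothesis; hence $a<0$. With that one sentence replaced, the proof is complete.
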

\begin{proof}
We first assume $a>0$ and $b>0$. Then we have $a>\sqrt{a^{2}-4b}\geq 0$, and hence, we obtain
$$
|\alpha _{+}|-|\alpha _{-}|
   =
   \alpha _{+}-\alpha _{-}
   =
   \sqrt{a^{2}-4b}.
$$
Next, we assume $a<0$ and $b>0$. Because of $a+\sqrt{a^{2}-4b}<0$ and $a-\sqrt{a^{2}-4b}<0$, we obtain 
$$
|\alpha _{+}|-|\alpha _{-}|
   =
   -\alpha _{+}+\alpha _{-}
   =
   -\sqrt{a^{2}-4b}.
$$
Finally, we assume $b<0$. Then we have $\sqrt{a^{2}-4b}>a$, which implies 
$$
|\alpha _{+}|-|\alpha _{-}|
   =
   \alpha _{+}+\alpha _{-}
   =
   a.
$$
\end{proof}

\begin{lem}
For any non-negative integer $n$, we have 
\begin{align}
\label{eq:expression 1}
a_{n}
   &=
   c_{0}h_{n}(\alpha _{+},\alpha _{-})+(c_{1}-ac_{0})h_{n-1}(\alpha _{+},\alpha _{-}) \\
  \nonumber
   &=
   c_{0}h_{n}(\alpha _{+},\alpha _{-})-bc_{-1}h_{n-1}(\alpha _{+},\alpha _{-}) \\
  \nonumber
   &=
   \frac{(c_{0}\alpha _{+}+c_{1}-ac_{0})\alpha _{+}^{n}-(c_{0}\alpha _{-}+c_{1}-ac_{0})\alpha _{-}^{n}}{\sqrt{a^{2}-4b}} \\
\label{eq:expression 1 final}
   &=
   \frac{(c_{1}-c_{0}\alpha _{-})\alpha _{+}^{n}-(c_{1}-c_{0}\alpha _{+})\alpha _{-}^{n}}{\sqrt{a^{2}-4b}}.
\end{align}
In particular, in the case of $\alpha =\alpha _{+}=\alpha _{-}$, we have 
\begin{align}
\label{eq:expression 1-1}
a_{n}
   &=
   c_{0}(n+1)\alpha ^{n}+(c_{1}-ac_{0})n\alpha ^{n-1} \\
\nonumber
   &=
   c_{0}(1-n)\alpha ^{n}+c_{1}n\alpha ^{n-1}.
\end{align}
\end{lem}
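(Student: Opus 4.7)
The plan is to establish the first identity by a uniqueness argument, then obtain the remaining expressions by purely algebraic manipulation.

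First I would note that the sequence $u_n := h_n(\alpha_+,\alpha_-)$ itself satisfies the recurrence $u_{n+2} - au_{n+1} + bu_n = 0$. This is because $\alpha_+ + \alpha_- = a$ and $\alpha_+\alpha_- = b$ by Vieta, and the standard identity
\[
  h_{n+1}(x,y) = (x+y)\,h_n(x,y) - xy\,h_{n-1}(x,y)
\]
specializes to the required recurrence; alternatively, each of $\alpha_\pm^{n+1}$ satisfies it, hence so does their normalized difference. Together with $h_{-1}=0$ and $h_0=1$, this identifies $h_n(\alpha_+,\alpha_-)$ as a fundamental solution.

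Second, I would set $\tilde a_n := c_0\,h_n(\alpha_+,\alpha_-) + (c_1 - ac_0)\,h_{n-1}(\alpha_+,\alpha_-)$. By linearity $\tilde a_n$ solves the same recurrence, and the initial values compute as $\tilde a_0 = c_0\cdot 1 + (c_1 - ac_0)\cdot 0 = c_0$ and $\tilde a_1 = c_0\cdot a + (c_1 - ac_0)\cdot 1 = c_1$. Uniqueness of the solution to \eqref{eq:two order diff eq} forces $a_n = \tilde a_n$, giving \eqref{eq:expression 1}. For the second equality, applying the recurrence at $n=-1$ yields $c_1 - ac_0 + bc_{-1} = 0$, so $c_1 - ac_0 = -bc_{-1}$.

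Third, when $\alpha_+\neq\alpha_-$ I would substitute $h_k(\alpha_+,\alpha_-) = (\alpha_+^{k+1} - \alpha_-^{k+1})/\sqrt{a^2-4b}$ into the first line and collect the coefficients of $\alpha_+^n$ and $\alpha_-^n$; this immediately yields the third displayed form. The passage to the fourth form then uses $\alpha_+ + \alpha_- = a$ to rewrite
\[
  c_0\alpha_+ + c_1 - ac_0 = c_1 - c_0(a - \alpha_+) = c_1 - c_0\alpha_-,
\]
and symmetrically for the $\alpha_-^n$-coefficient. The convention $\sqrt{a^2-4b} = \sqrt{-1}\sqrt{4b-a^2}$ in the complex case makes the formula valid there too, since $\alpha_+ - \alpha_- = \sqrt{a^2-4b}$ retains its meaning.

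Finally, for the repeated-root case $\alpha = \alpha_+ = \alpha_-$, I would use $h_n(\alpha,\alpha) = \sum_{k=0}^n \alpha^n = (n+1)\alpha^n$, which substituted into \eqref{eq:expression 1} produces the first form of \eqref{eq:expression 1-1}. The second form then follows by replacing $a$ with $2\alpha$ in the term $(c_1 - ac_0)n\alpha^{n-1}$ and simplifying. I do not anticipate any real obstacle: the entire argument reduces to checking initial conditions and to elementary regrouping, with the only minor care point being the complex branch convention for $\sqrt{a^2-4b}$ when $a^2-4b<0$.
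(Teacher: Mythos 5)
Your proposal is correct and follows essentially the same route as the paper: verify that $h_n(\alpha_+,\alpha_-)$ satisfies the recurrence, check the initial values $n=0,1$, and conclude by uniqueness, with the remaining expressions obtained by elementary substitution (the paper in fact only writes out the proof of the first identity and leaves the rest as "easily following," which your proposal fills in accurately).
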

\begin{proof}
We prove only (\ref{eq:expression 1}); the others easily follow from this. 
By the definition, we have $h_{-1}(\alpha _{+},\alpha _{-})=0$, 
$h_{0}(\alpha _{+},\alpha _{-})=1$ and $h_{1}(\alpha _{+},\alpha _{-})=a$. 
Thus, (\ref{eq:expression 1}) holds for $n=0,1$. 
Further, we have 
\begin{align*}
& h_{n+2}(\alpha _{+},\alpha _{-})-ah_{n+1}(\alpha _{+},\alpha _{-})+bh_{n}(\alpha _{+},\alpha _{-}) \\
   & \quad =
   \frac{(\alpha _{+}^{n+3}-a\alpha _{+}^{n+2}+b\alpha _{+}^{n+1})-(\alpha _{-}^{n+3}-a\alpha _{-}^{n+2}+b\alpha _{-}^{n+1})}{\alpha _{+}-\alpha _{-}}
   =0 ,
\end{align*}
which implies that (\ref{eq:expression 1}) holds for any non-negative integer $n$. 
\end{proof}

\begin{lem}
\label{thm:Goto lemma}
If we assume $c_{0}c_{1}\not=0$ and $a^{2}-4b\geq 0$, 
then the solution $a_{n}$ of the difference equation (\ref{eq:two order diff eq}) is not zero, 
except for at most one $n$. 
In particular, we have $a_{n}\not=0$ for sufficiently large $n$. 
\end{lem}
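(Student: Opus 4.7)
The plan is to argue directly from the explicit formulas in the preceding lemma, splitting into the two standard cases according to whether the characteristic polynomial $x^{2}-ax+b$ has a double root. In each case I will show that the equation $a_{n}=0$ has at most one integer solution, from which both conclusions of the lemma follow immediately.

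In the confluent case $\alpha:=\alpha_{+}=\alpha_{-}$, formula (\ref{eq:expression 1-1}) rewrites as
\[
a_{n}\;=\;\alpha^{n-1}\bigl(c_{0}\alpha+n(c_{1}-c_{0}\alpha)\bigr).
\]
Since $ab\neq 0$ forces $\alpha\neq 0$, the zeros of $a_{n}$ are exactly the integer zeros of the linear-in-$n$ factor. If $c_{1}=c_{0}\alpha$ the factor reduces to the constant $c_{0}\alpha$, which is nonzero by $c_{0}\neq 0$, so $a_{n}$ never vanishes; otherwise the linear factor has at most one integer zero.

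In the distinct-root case (real by $a^{2}-4b\geq 0$), formula (\ref{eq:expression 1 final}) writes $a_{n}$ as a linear combination of $\alpha_{+}^{n}$ and $\alpha_{-}^{n}$. If one of the coefficients $c_{1}-c_{0}\alpha_{\pm}$ vanishes, say $c_{1}=c_{0}\alpha_{+}$, then the other coefficient equals $c_{0}(\alpha_{+}-\alpha_{-})\neq 0$ and $a_{n}$ is a nonzero scalar multiple of $\alpha_{+}^{n}$, which never vanishes; the case $c_{1}=c_{0}\alpha_{-}$ is symmetric. Otherwise $a_{n}=0$ is equivalent to
\[
(\alpha_{+}/\alpha_{-})^{n}\;=\;\frac{c_{1}-c_{0}\alpha_{+}}{c_{1}-c_{0}\alpha_{-}},
\]
whose right-hand side is a fixed nonzero real number. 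The key point is that $r:=\alpha_{+}/\alpha_{-}$ satisfies $r\neq\pm 1$: $r=1$ is excluded by $\alpha_{+}\neq\alpha_{-}$, and $r=-1$ would force $a=\alpha_{+}+\alpha_{-}=0$, contradicting $ab\neq 0$. Hence $|r|\neq 1$ and the map $n\mapsto r^{n}$ is injective on $\mathbb{Z}$, so at most one integer $n$ satisfies the equation.

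The only nontrivial step, if it deserves the name, is this exclusion $r\neq -1$, which is the one place the full assumption $a\neq 0$ (rather than just $b\neq 0$) is genuinely used; the remaining work is direct manipulation of the explicit formulas. The ``in particular'' clause is then immediate, since a sequence with at most one zero is nonzero for all sufficiently large $n$.
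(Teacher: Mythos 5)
Your proof is correct and follows essentially the same route as the paper's: use the closed-form expressions from the preceding lemma, split into the double-root and distinct-root cases, and reduce $a_{n}=0$ to a linear equation in $n$ (respectively an equation $r^{n}=\text{const}$ with $|r|\neq 1$, the exclusion $r\neq -1$ coming from $a\neq 0$, which the paper records earlier as the equivalence $|\alpha_{+}|=|\alpha_{-}|\Leftrightarrow\alpha_{+}=\alpha_{-}$). You are slightly more explicit than the paper about the degenerate subcases $c_{1}=c_{0}\alpha_{\pm}$ and about why $n\mapsto r^{n}$ is injective, but the substance is identical.
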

\begin{proof}
First, we assume $a^{2}-4b=0$. 
In this case, we have $\alpha _{+}=\alpha _{-}=\alpha $, and hence, 
$$
a_{n}=c_{0}(1-n)\alpha ^{n}+c_{1}n\alpha ^{n-1}=\alpha ^{n-1}(c_{0}\alpha -(c_{0}\alpha -c_{1})n)=0
$$
implies $c_{0}\alpha =(c_{0}\alpha -c_{1})n$. 
We may assume $c_{0}\alpha -c_{1}\not=0$, since $c_{0}\alpha -c_{1}=0$ implies $c_{0}=c_{1}=0$. 
We obtain 
$$
n=\frac{c_{0}\alpha }{c_{0}\alpha -c_{1}} ,
$$
and the number of such integers $n$ is at most one. 
Next, we assume $a^{2}-4b>0$. Then, 
$$
a_{n}=\frac{(c_{1}-c_{0}\alpha _{-})\alpha _{+}^{n}-(c_{1}-c_{0}\alpha _{+})\alpha _{-}^{n}}{\sqrt{a^{2}-4b}}=0
$$
implies 
\begin{align}
\label{eq:a}
(c_{1}-c_{0}\alpha _{-})\alpha _{+}^{n}=(c_{1}-c_{0}\alpha _{+})\alpha _{-}^{n}.
\end{align}
By $\alpha _{+}\not=\alpha _{-}$ and (\ref{eq:eq char root nonzero}),
at least one of $c_{1}-c_{0}\alpha _{+}$ and $c_{1}-c_{0}\alpha _{-}$ are not zero. 
The equality (\ref{eq:a}) is equivalent to 
$$
c_{1}-c_{0}\alpha _{-}=(c_{1}-c_{0}\alpha _{+})\left(\frac{\alpha _{-}}{\alpha _{+}}\right)^{n} ,
$$
and the number of such integers $n$ is at most one. 
\end{proof}

\begin{lem}
\label{lem:limit exa}
We assume $c_{0}c_{1}\not=0$. 
The ratio $a_{n+1}/a_{n}$ of consecutive solutions converges if and only if 
$a^{2}-4b\geq 0$ holds. 
Moreover, in this case, the limit is give by 
\begin{align}
\label{eq:limit exa}
\lim_{n \to \infty}
   \frac{a_{n+1}}{a_{n}}
   =
   \begin{cases}
   \alpha & (c_{1}-c_{0}\beta \not=0) \\
   \beta & (c_{1}-c_{0}\beta =0)
   \end{cases}.
\end{align}
\end{lem}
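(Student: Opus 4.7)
The plan is to exploit the explicit formulas (\ref{eq:expression 1 final}) and (\ref{eq:expression 1-1}) for $a_n$ together with Lemma \ref{thm:Goto lemma}, splitting on the sign of $a^2-4b$. Under the assumption $c_0 c_1 \neq 0$, Lemma \ref{thm:Goto lemma} guarantees that when $a^2-4b \geq 0$ the quotient $a_{n+1}/a_n$ is defined for all sufficiently large $n$, so it may be freely manipulated; when $a^2-4b<0$ the lemma does not apply, and a separate argument is needed.

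For the forward direction, first take $a^2-4b > 0$ and rewrite (\ref{eq:expression 1 final}) in terms of the ordered roots $\alpha,\beta$ satisfying $|\alpha|>|\beta|$ (strict by (\ref{eq:eq cond char root})), obtaining $a_n = \bigl[(c_1-c_0\beta)\alpha^n - (c_1-c_0\alpha)\beta^n\bigr]/\sqrt{a^2-4b}$. Dividing numerator and denominator of $a_{n+1}/a_n$ by $\alpha^n$ and using $(\beta/\alpha)^n \to 0$ yields the limit $\alpha$ when $c_1 - c_0\beta \neq 0$. When $c_1 - c_0\beta = 0$, the $\alpha^n$-term vanishes identically; since $c_0 \neq 0$ and $\alpha \neq \beta$, the coefficient $c_1-c_0\alpha = c_0(\beta-\alpha)$ is nonzero, so $a_n$ reduces to a nonzero multiple of $\beta^n$ and the ratio equals $\beta$ for every $n$. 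In the degenerate case $a^2-4b=0$ one has $\alpha=\beta$, and formula (\ref{eq:expression 1-1}) presents $a_n$ as $\alpha^{n-1}$ times a linear (or constant) function of $n$, whose consecutive ratio tends to $\alpha=\beta$ in either subcase, matching the stated formula.

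For the converse, assume $a^2-4b<0$ and suppose for contradiction that $a_{n+1}/a_n$ converges to some $L\in\mathbb{R}$; in particular $a_n\neq 0$ for all large $n$. Dividing the recurrence (\ref{eq:two order diff eq}) by $a_{n+1}$ gives $a_{n+2}/a_{n+1} = a - b\,(a_n/a_{n+1})$. If $L\neq 0$, passing to the limit yields $L^2 - aL + b = 0$, which has no real root since its discriminant $a^2-4b$ is negative, a contradiction. If $L=0$, then $|a_n/a_{n+1}|\to\infty$, so the right-hand side is unbounded (as $b\neq 0$), again a contradiction. The main subtlety is the possibility of $a_n=0$ in the forward direction, which Lemma \ref{thm:Goto lemma} rules out for large $n$; a further delicate point is ruling out $L=0$ in the converse, which is handled by noting that $b\neq 0$ prevents the trivial limit.
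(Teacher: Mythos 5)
Your proof is correct, and the half establishing divergence when $a^{2}-4b<0$ takes a genuinely different route from the paper. The paper writes the complex roots as $\alpha_{\pm}=|b|^{1/2}e^{\pm\sqrt{-1}\pi\theta}$, expresses $a_{n+1}/a_{n}$ through sines, and then splits on whether $\theta$ is rational (periodicity up to sign) or irrational (Kronecker's approximation theorem) to rule out convergence. You instead observe that if $a_{n+1}/a_{n}\to L$ then, dividing the recurrence by $a_{n+1}$, the limit $L$ must satisfy $L^{2}-aL+b=0$ when $L\neq 0$, which is impossible for negative discriminant, while $L=0$ is excluded because $b\neq 0$ forces $a-b\,(a_{n}/a_{n+1})$ to blow up. This is more elementary (no equidistribution input), and it is exactly the fixed-point viewpoint on the Riccati recurrence $b_{n+1}=a-b/b_{n}$ that the paper only mentions later in Remark \ref{rmk:Ricacci}; what the paper's heavier argument buys in exchange is finer information about \emph{how} the ratio fails to converge (periodic sign changes versus dense oscillation). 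Your forward direction, computing the limit from (\ref{eq:expression 1 final}) when $|\alpha|>|\beta|$ and from (\ref{eq:expression 1-1}) when $\alpha=\beta$, with the observation that $c_{1}-c_{0}\beta=0$ forces $a_{n}$ to be a nonzero multiple of $\beta^{n}$, coincides with the paper's computation, and your appeal to Lemma \ref{thm:Goto lemma} to justify that the ratios are eventually defined is a point the paper leaves implicit.
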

\begin{proof}
First, we prove the ``if'' part. 
We assume $a^{2}-4b<0$, that is, $\alpha _{\pm } \not\in \mathbb{R}$. 
Since there exist $\theta \in \mathbb{R}\setminus \mathbb{Z}$ such that 
$$
\alpha _{\pm }
   =
   |b|^{\frac{1}{2}}e^{\pm \sqrt{-1}\pi \theta } ,
$$
we have 
\begin{align*}
\frac{a_{n+1}}{a_{n}}
   &=
   |b|^{\frac{1}{2}}\frac{c_{0}|b|^{\frac{1}{2}}\sin{(\pi (n+2)\theta )}+(c_{1}-ac_{0})\sin{(\pi (n+1)\theta )}}{c_{0}|b|^{\frac{1}{2}}\sin{(\pi (n+1)\theta )}+(c_{1}-ac_{0})\sin{(\pi n\theta )}}.
\end{align*}
If $\theta =\frac{N}{M}\in \mathbb{Q}$, then $a\not=0$ implies $M\geq 3$, and hence, 
the ratio $a_{n+1}/a_{n}$ does not converge because of $a_{n+M}=(-1)^{N}a_{n}$. 
On the other hand, if we assume $\theta \not\in \mathbb{Q}$, then the ratio $a_{n+1}/a_{n}$ does not converge
by Kronecker's approximation theorem (see \cite[Theorem 439]{HW}). 

Next, we prove the ``only if'' part. 
In the case of $|\alpha |>|\beta |$, we have 
\begin{align*}
\lim_{n \to \infty}
   \frac{a_{n+1}}{a_{n}}
   &=
   \lim_{n \to \infty}
   \frac{(c_{1}-c_{0}\beta )\alpha ^{n+1}-(c_{1}-c_{0}\alpha )\beta ^{n+1}}{(c_{1}-c_{0}\beta )\alpha ^{n}-(c_{1}-c_{0}\alpha )\beta ^{n}} \\
   &=
   \begin{cases}
   \alpha & (c_{1}-c_{0}\beta \not=0) \\
   \beta & (c_{1}-c_{0}\beta =0)
   \end{cases}.
\end{align*}
We assume $|\alpha |=|\beta |$. 
By (\ref{eq:eq cond char root}), we have $\alpha =\beta =\frac{a}{2}$, and hence, we obtain 
\begin{align*}
\lim_{n \to \infty}
   \frac{a_{n+1}}{a_{n}}
   &=
   \lim_{n \to \infty}
   \frac{-c_{0}n\alpha ^{n+1}+c_{1}(n+1)\alpha ^{n}}{c_{0}(1-n)\alpha ^{n}+c_{1}n\alpha ^{n-1}} \\
   &=
   \alpha 
   \lim_{n \to \infty}
   \frac{n(c_{1}-c_{0}\alpha )+c_{1}}{n(c_{1}-c_{0}\alpha )+c_{0}\alpha} \\
   &=
   \begin{cases}
   \alpha & (c_{1}-c_{0}\alpha \not=0) \\
   \frac{c_{1}}{c_{0}}=\alpha & (c_{1}-c_{0}\alpha =0)
   \end{cases}.
\end{align*}
\end{proof}

\section{Proofs of main results}\label{section:proof}
\begin{proof}[Proof of Theorem \ref{thm:monot1-2}]
We prove the theorem by considering the following eight cases: 
\begin{align*}
  &\textrm{(a1)}\ \alpha _{+}>1, \ a^{2}-4b>0, \ a>0;& 
  &\textrm{(a2)}\ \alpha _{+}>1, \ a^{2}-4b>0, \ a<0;&
  &\textrm{(a3)}\ \alpha _{+}>1, \ a^{2}-4b=0;& \\
  &\textrm{(b1)}\ \alpha _{+}<1, \ a^{2}-4b>0, \ a>0;& 
  &\textrm{(b2)}\ \alpha _{+}<1, \ a^{2}-4b>0, \ a<0;&
  &\textrm{(b3)}\ \alpha _{+}<1, \ a^{2}-4b=0;& \\
  &\textrm{(c1)}\ \alpha _{+}=1, \ a^{2}-4b>0;& 
  &\textrm{(c2)}\ \alpha _{+}=1, \ a^{2}-4b=0.&
  && 
\end{align*}
(a1) 
By (\ref{eq:expression 1 final}), we have
\begin{align}
\label{eq:a12}
a_{n+1}-a_{n}
   &=
   \frac{\alpha _{+}^{n}(\alpha _{+}-1)(c_{1}-c_{0}\alpha _{-})-\alpha _{-}^{n}(\alpha _{-}-1)(c_{1}-c_{0}\alpha _{+})}{\sqrt{a^{2}-4b}} 
\\
\label{eq:a1}
   &=
   \frac{\alpha _{+}^{n}}{\sqrt{a^{2}-4b}}
   \left[(\alpha _{+}-1)(c_{1}-c_{0}\alpha _{-})-\left(\frac{\alpha _{-}}{\alpha _{+}}\right)^{n}(\alpha _{-}-1)(c_{1}-c_{0}\alpha _{+})\right]
\end{align}
Since the inequality (\ref{eq:alpha ineq}) $|\alpha _{+}|> |\alpha _{-}|$ holds by the assumption $a>0$,  
the absolute value of the second term of the right-hand side converges to zero. 
Therefore, $a_{n}\leq a_{n+1}$ holds for sufficiently large $n$ if and only if 
$(\alpha _{+}-1)(c_{1}-c_{0}\alpha _{-})> 0$ holds. 
\\
(a2)
We rewrite (\ref{eq:a12}) as 
\begin{align}
\label{eq:a2}
a_{n+1}-a_{n}
   &=
   \frac{\alpha _{-}^{n}}{\sqrt{a^{2}-4b}}
   \left[\left(\frac{\alpha _{+}}{\alpha _{-}}\right)^{n}(\alpha _{+}-1)(c_{1}-c_{0}\alpha _{-})-(\alpha _{-}-1)(c_{1}-c_{0}\alpha _{+})\right] .
\end{align}
The inequality (\ref{eq:alpha ineq2}) $|\alpha _{+}|< |\alpha _{-}|$ holds by the assumption $a<0$. 
Since $\alpha _{+}>1$ and $a=\alpha _{+}+\alpha _{-}<0$ imply $\alpha _{-}<-1$, 
the value of (\ref{eq:a2}) takes both positive and negative numbers. 
Thus, (\ref{eq:monot1 0}) does not hold. 
\\
(a3)
By (\ref{eq:expression 1-1}), we have 
\begin{align}
a_{n+1}-a_{n}
   &=
   c_{0}\alpha_{+} ^{n}((n+2)\alpha_{+} -(n+1))
   +(c_{1}-ac_{0})\alpha_{+} ^{n-1}((n+1)\alpha_{+} -n) \nonumber \\
   &=
   ((n+1)\alpha_{+} -n)(c_{1}-c_{0}\alpha _{-})\alpha_{+} ^{n-1}
   +c_{0}\alpha_{+} ^{n}(\alpha_{+} -1) \nonumber \\
\label{eq:a3}
   &=
   \alpha _{+}^{n-1}\left[ (n+1)\left( \alpha _{+}-1+\frac{1}{n+1}\right)(c_{1}-c_{0}\alpha _{-})+c_{0}\alpha _{+}(\alpha _{+}-1)\right].
\end{align}
Since the term $(n+1)\left( \alpha _{+}-1+\frac{1}{n+1}\right)$ can take an arbitrarily large number, 
$a_{n}\leq a_{n+1}$ holds for sufficiently large $n$ if and only if $c_{1}-c_{0}\alpha _{-}> 0$ holds. 
\\
(b1)
In the case of $\alpha _{+}<0$, the value of (\ref{eq:a1}) takes both positive and negative numbers, 
and hence, (\ref{eq:monot1 0}) does not hold. 
In the case of $0<\alpha _{+}<1$, the inequality $a_{n}\leq a_{n+1}$ holds for sufficiently large $n$ if and only if 
$(\alpha _{+}-1)(c_{1}-c_{0}\alpha _{-})\geq 0$ holds. 
\\
(b2)
Similarly to (a2), by using $\alpha _{-}<0$ and (\ref{eq:a2}), we can show that (\ref{eq:monot1 0}) does not hold. 
\\
(b3)
For sufficiently large $n$, 
the value of $(n+1)\left(\alpha _{+}-1+\frac{1}{n+1}\right)$ in (\ref{eq:a3}) takes
a negative number with arbitrarily large absolute value. 
In the case of $\alpha _{+}<0$, the inequality (\ref{eq:monot1 0}) does not hold. 
In the case of $0<\alpha _{+}<1$, the inequality $a_{n}\leq a_{n+1}$ holds for sufficiently large $n$ if and only if 
$c_{1}-c_{0}\alpha _{-}\leq 0$ holds. 
\\
(c1)
Because of
$$
b=\alpha _{-}=a-\alpha _{+}=a-1, 
$$
we have 
\begin{align}
\label{eq:c1}
a_{n+1}-a_{n}
   &=
   (a-1)(a_{n}-a_{n-1})
   =
   (a-1)^{n}(a_{1}-a_{0}).
\end{align}
This implies 
$$
a_{0}\leq a_{1}, \, a\geq 1 \quad \Leftrightarrow \quad a_{n+1}-a_{n}\geq 0 \quad (n \in \mathbb{Z}_{\geq 0}).
$$
(c2)
In this case, we have $\alpha _{+}=\alpha _{-}=1$ and $a=2$. 
Thus, (\ref{eq:c1}) implies  
$$
a_{0}\leq a_{1} \quad \Leftrightarrow \quad a_{n+1}-a_{n}\geq 0 \quad (n \in \mathbb{Z}_{\geq 0}).
$$
\end{proof}

\begin{proof}[Proof of Theorem \ref{thm:monot2-2}]
We first assume $c_{1}\not= c_{0}\beta $. 
By (\ref{eq:limit exa}), the ratio $a_{n+1}/a_{n}$ converges to $\alpha$. 
Since we have 
\begin{align*}
\left|\alpha -\frac{a_{n+2}}{a_{n+1}}\right|
   =
   \left|\alpha -a+b\frac{a_{n}}{a_{n+1}}\right|
   =
   \left|\frac{-b}{\alpha }+b\frac{a_{n}}{a_{n+1}}\right|
   =
   \left|\frac{b}{\alpha }\frac{a_{n}}{a_{n+1}}\right|\left|\alpha -\frac{a_{n+1}}{a_{n}}\right|, 
\end{align*}
the inequality (\ref{eq:monot2}) is equivalent to 
\begin{align}
\label{eq:another monot2}
\left|\frac{b}{\alpha }\frac{a_{n}}{a_{n+1}}\right|
   =
   \left|\beta \frac{a_{n}}{a_{n+1}}\right|
   \leq 1 .
\end{align}
If $a^{2}-4b>0$ holds, it easily follows from $|\alpha |>|\beta |$ that 
(\ref{eq:another monot2}) holds for sufficiently large $n$. 
We assume $a^{2}-4b=0$. 
By $\alpha =\beta $ and (\ref{eq:expression 1-1}), we have 
\begin{align*}
\left|\beta \frac{a_{n}}{a_{n+1}}\right|
   &=
   \left|\alpha \frac{c_{0}(n+1)\alpha ^{n}+(c_{1}-ac_{0})n\alpha ^{n-1}}{c_{0}(n+2)\alpha ^{n+1}+(c_{1}-ac_{0})(n+1)\alpha ^{n}}\right| \\
   &=
   \left|\frac{c_{0}(n+1)\alpha +(c_{1}-ac_{0})n}{c_{0}(n+2)\alpha +(c_{1}-ac_{0})(n+1)}\right| \\
   &=
   \left|\frac{c_{0}\alpha +(c_{1}-c_{0}\alpha )n}{c_{1}+(c_{1}-c_{0}\alpha )n}\right|.
\end{align*}
By the assumption $c_{1}\not= c_{0}\alpha $, we have 
\begin{align*}
\left|\beta \frac{a_{n}}{a_{n+1}}\right|
   &=
   1-\frac{1}{1+\frac{c_{1}}{c_{1}-\alpha c_{0}}\frac{1}{n}}\frac{1}{n}
   \leq 1 
\end{align*}
for sufficiently large $n$, and hence, the inequality (\ref{eq:monot2}) holds. 

Next, we consider the case of $c_{1}= c_{0}\beta $. 
In this case, we have 
$$
a_{n}=c_{0}\beta ^{n}
$$
for any non-negative integer $n$. 
Indeed, the claim obviously holds for $n=0,1$, and if we assume it holds for $n$ and $n+1$ then we have 
\begin{align*}
a_{n+2}
   =
   aa_{n+1}-ba_{n}
   =
   c_{0}\beta ^{n}((\alpha +\beta )\beta -\alpha \beta )
   =
   c_{0}\beta ^{n+2}.
\end{align*}
Therefore, for any non-negative integer $n$, we have 
\begin{align*}
\left|\alpha -\frac{a_{n+1}}{a_{n}}\right|
   =
   \left|\alpha -\beta \right|\geq 0,
\end{align*}
and hence, the inequality (\ref{eq:monot2}) holds. 
\end{proof}

\begin{proof}[Proof of Theorem \ref{thm:monot1}]
We first assume $\alpha _{+}=1$. 
In this case, the ``if'' part of the proof is obvious. 
We show the ``only if'' part. 
By
$$
a_{k+1}-a_{k}
   =
   (a-1)a_{k}-ba_{k-1}
   =
   (a-1)(a_{k}-a_{k-1})\geq 0 ,
$$
we obtain $a-1\geq 0$. 
Thus, the inequality $0<a_{n}\leq a_{n+1}$ holds for any $n\geq k-1$. 

Next, we assume $\alpha _{+}\not=1$. 
The ``if'' part clearly follows from Theorem \ref{thm:monot1-2}. 
We show the ``only if'' part. 
Namely, we assume the condition (\ref{eq:cond1}). 
Note that the inequality (\ref{eq:alpha ineq}) $|\alpha _{+}|=\alpha _{+}\geq |\alpha _{-}|$ holds by
$a>0$. 
The condition $a_{k-1}\leq a_{k}\leq a_{k+1}$ implies 
\begin{align*}
a_{k}-a_{k-1}
   &=
   \frac{\alpha _{+}^{k-1}}{\sqrt{a^{2}-4b}}
   \left[(\alpha _{+}-1)(c_{1}-c_{0}\alpha _{-})-\left(\frac{\alpha _{-}}{\alpha _{+}}\right)^{k-1}(\alpha _{-}-1)(c_{1}-c_{0}\alpha _{+})\right]
   \geq 0
, \\
a_{k+1}-a_{k}
   &=
   \frac{\alpha _{+}^{k}}{\sqrt{a^{2}-4b}}
   \left[(\alpha _{+}-1)(c_{1}-c_{0}\alpha _{-})-\left(\frac{\alpha _{-}}{\alpha _{+}}\right)^{k}(\alpha _{-}-1)(c_{1}-c_{0}\alpha _{+})\right]
   \geq 0 .
\end{align*}
In the following, let $m$ be an arbitrary positive integer. 
\\
(A1) 
In the case of $(\alpha _{-}-1)(c_{1}-c_{0}\alpha _{+})\geq 0$, $\frac{\alpha _{-}}{\alpha _{+}}>0$, 
we have 
\begin{align*}
a_{k+m+1}-a_{k+m}
   &=
   \frac{\alpha _{+}^{k+m}}{\sqrt{a^{2}-4b}}
   \left[(\alpha _{+}-1)(c_{1}-c_{0}\alpha _{-})-\left(\frac{\alpha _{-}}{\alpha _{+}}\right)^{k+m}(\alpha _{-}-1)(c_{1}-c_{0}\alpha _{+})\right] \\
   &\geq
   \frac{\alpha _{+}^{k+m}}{\sqrt{a^{2}-4b}}
   \left[(\alpha _{+}-1)(c_{1}-c_{0}\alpha _{-})-\left(\frac{\alpha _{-}}{\alpha _{+}}\right)^{k}(\alpha _{-}-1)(c_{1}-c_{0}\alpha _{+})\right]
   \geq 0.
\end{align*}
(A2) 
We consider the case of $(\alpha _{-}-1)(c_{1}-c_{0}\alpha _{+})\geq 0$, $\frac{\alpha _{-}}{\alpha _{+}}<0$. 
If $k$ is odd, then $a_{k+2m+1}-a_{k+2m}\geq 0$ is obvious, and we have 
\begin{align*}
a_{k+2m}-a_{k+2m-1}
   &=
   \frac{\alpha _{+}^{k+2m-1}}{\sqrt{a^{2}-4b}}
   \left[(\alpha _{+}-1)(c_{1}-c_{0}\alpha _{-})-\left(\frac{\alpha _{-}}{\alpha _{+}}\right)^{k+2m-1}(\alpha _{-}-1)(c_{1}-c_{0}\alpha _{+})\right] \\
   &=
   \frac{\alpha _{+}^{k+2m-1}}{\sqrt{a^{2}-4b}}
   \left[(\alpha _{+}-1)(c_{1}-c_{0}\alpha _{-})-\left(\frac{\alpha _{-}}{\alpha _{+}}\right)^{k-1}(\alpha _{-}-1)(c_{1}-c_{0}\alpha _{+})\right] \\
   & \quad +
   \frac{\alpha _{+}^{k+2m-1}}{\sqrt{a^{2}-4b}}\left(\frac{\alpha _{-}}{\alpha _{+}}\right)^{k-1}\left[1-\left(\frac{\alpha _{-}}{\alpha _{+}}\right)^{2m}\right](\alpha _{-}-1)(c_{1}-c_{0}\alpha _{+})
   \geq 0.
\end{align*}
If $k$ is even, then $a_{k+2m}-a_{k+2m-1}\geq 0$ is obvious, and we have 
\begin{align*}
a_{k+2m+1}-a_{k+2m}
   &=
   \frac{\alpha _{+}^{k+2m}}{\sqrt{a^{2}-4b}}
   \left[(\alpha _{+}-1)(c_{1}-c_{0}\alpha _{-})-\left(\frac{\alpha _{-}}{\alpha _{+}}\right)^{k+2m}(\alpha _{-}-1)(c_{1}-c_{0}\alpha _{+})\right] \\
   &=
   \frac{\alpha _{+}^{k+2m}}{\sqrt{a^{2}-4b}}
   \left[(\alpha _{+}-1)(c_{1}-c_{0}\alpha _{-})-\left(\frac{\alpha _{-}}{\alpha _{+}}\right)^{k}(\alpha _{-}-1)(c_{1}-c_{0}\alpha _{+})\right] \\
   & \quad +
   \frac{\alpha _{+}^{k+2m}}{\sqrt{a^{2}-4b}}
   \left(\frac{\alpha _{-}}{\alpha _{+}}\right)^{k}\left[1-\left(\frac{\alpha _{-}}{\alpha _{+}}\right)^{2m}\right](\alpha _{-}-1)(c_{1}-c_{0}\alpha _{+})
   \geq 0.
\end{align*}
(B1) 
In the case of $(\alpha _{-}-1)(c_{1}-c_{0}\alpha _{+})\leq 0$, $\frac{\alpha _{-}}{\alpha _{+}}>0$, 
the inequality $a_{k+m+1}-a_{k+m} \geq 0$ holds obviously. 
\\
(B2) 
We consider the case of $(\alpha _{-}-1)(c_{1}-c_{0}\alpha _{+})\leq 0$, $\frac{\alpha _{-}}{\alpha _{+}}<0$. 
If $k$ is odd, then $a_{k+2m}-a_{k+2m-1}\geq 0$ is obvious, and we have 
\begin{align*}
a_{k+2m+1}-a_{k+2m}
   &=
   \frac{\alpha _{+}^{k+2m}}{\sqrt{a^{2}-4b}}
   \left[(\alpha _{+}-1)(c_{1}-c_{0}\alpha _{-})-\left(\frac{\alpha _{-}}{\alpha _{+}}\right)^{k+2m}(\alpha _{-}-1)(c_{1}-c_{0}\alpha _{+})\right] \\
   &=
   \frac{\alpha _{+}^{k+2m}}{\sqrt{a^{2}-4b}}
   \left[(\alpha _{+}-1)(c_{1}-c_{0}\alpha _{-})-\left(\frac{\alpha _{-}}{\alpha _{+}}\right)^{k}(\alpha _{-}-1)(c_{1}-c_{0}\alpha _{+})\right] \\
   & \quad +
   \frac{\alpha _{+}^{k+2m}}{\sqrt{a^{2}-4b}}
   \left(\frac{\alpha _{-}}{\alpha _{+}}\right)^{k}\left[1-\left(\frac{\alpha _{-}}{\alpha _{+}}\right)^{2m}\right](\alpha _{-}-1)(c_{1}-c_{0}\alpha _{+})
   \geq 0.
\end{align*}
If $k$ is even, then $a_{k+2m+1}-a_{k+2m}\geq 0$ is obvious, and we have 
\begin{align*}
a_{k+2m}-a_{k+2m-1}
   &=
   \frac{\alpha _{+}^{k+2m-1}}{\sqrt{a^{2}-4b}}
   \left[(\alpha _{+}-1)(c_{1}-c_{0}\alpha _{-})-\left(\frac{\alpha _{-}}{\alpha _{+}}\right)^{k+2m-1}(\alpha _{-}-1)(c_{1}-c_{0}\alpha _{+})\right] \\
   &=
   \frac{\alpha _{+}^{k+2m-1}}{\sqrt{a^{2}-4b}}
   \left[(\alpha _{+}-1)(c_{1}-c_{0}\alpha _{-})-\left(\frac{\alpha _{-}}{\alpha _{+}}\right)^{k-1}(\alpha _{-}-1)(c_{1}-c_{0}\alpha _{+})\right] \\
   & \quad +
   \frac{\alpha _{+}^{k+2m-1}}{\sqrt{a^{2}-4b}}
   \left(\frac{\alpha _{-}}{\alpha _{+}}\right)^{k-1}\left[1-\left(\frac{\alpha _{-}}{\alpha _{+}}\right)^{2m}\right](\alpha _{-}-1)(c_{1}-c_{0}\alpha _{+})
   \geq 0.
\end{align*}
\end{proof}
By the proof of Theorem \ref{thm:monot1}, we conclude that 
if the inequality (\ref{eq:monot1-k}) holds for $n=k-1$, $n=k$ and for sufficiently large $n$, 
then it holds for any $n \geq k-1$. 
\begin{rmk}
\label{rmk:Hartman-Aurel}
In Hartman-Aurel \cite{HA1}, \cite{HA2}, 
some sufficient conditions for the monotone property $y_{n}<y_{n+1}$ of solutions to 
the linear ordinary difference equations are given. 
If we apply the result for our difference equation (\ref{eq:two order diff eq}), 
we obtain a sufficient condition for the property: 
$$
a-1-b> 0, \quad b> 0 .
$$
Indeed, it is obvious since we have 
$$
a_{n+2}-a_{n+1}=(a-1-b)a_{n+1}+b(a_{n+1}-a_{n}). 
$$
\end{rmk}

\begin{proof}[Proof of Corollary \ref{cor:monot1}]
Note that $a_{1}=aa_{0}$. 
We apply Theorem \ref{thm:monot1} for $k=0$. 
The condition (\ref{eq:cond1}) is reduced into 
\begin{align*}
& 0<a_{-1}=0< a_{0}\leq a_{1}=aa_{0}, \quad 1\not=\alpha _{+}>0,\quad a>0, \quad (\alpha _{+}-1)(c_{1}-c_{0}\alpha _{-})=a_{0}\alpha _{+}(\alpha _{+}-1)> 0,
\end{align*}
which is equivalent to $a_{0}>0$, $a\geq 1$, $\alpha _{+}>1$. 
The condition (\ref{eq:cond1 another}) is equivalent to $a_{0}>0$, $a\geq 1$, $\alpha _{+}=1$. 
Summarizing them, we obtain the condition (\ref{eq:cond1 cor}). 
\end{proof}
\begin{rmk}
\label{rmk:domain D1}
Note that under the condition (\ref{eq:cond1 another}) in Corollary \ref{cor:monot1}, we have 
$$
\alpha _{+}=\alpha , \quad \alpha _{-}=\beta .
$$
In this situation, two domains 
\begin{align}
\label{eq:domain D1}
D_{1}&:=\{(\alpha ,\beta ) \in \mathbb{R}^{2} \mid \alpha +\beta \geq 1, \, \alpha \geq 1, \, |\alpha |\geq |\beta |\}
,\\
\label{eq:domain D1prime}
D_{1}^{\prime }&:=\{(a,b) \in \mathbb{R}^{2} \mid a \geq 1, \, \alpha \geq 1, \, |\alpha |\geq |\beta |\}
\end{align}
are drawn as in Figures \ref{fig:D1} and \ref{fig:Dab1}, respectively. 

\begin{figure}[h]
  \begin{minipage}[b]{0.45\linewidth}
    \centering
    \includegraphics[keepaspectratio, scale=0.8]{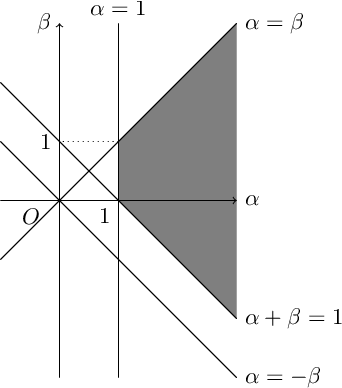}
    \caption{$D_{1}$}
    \label{fig:D1}
  \end{minipage}
  \begin{minipage}[b]{0.45\linewidth}
    \centering
    \includegraphics[keepaspectratio, scale=0.8]{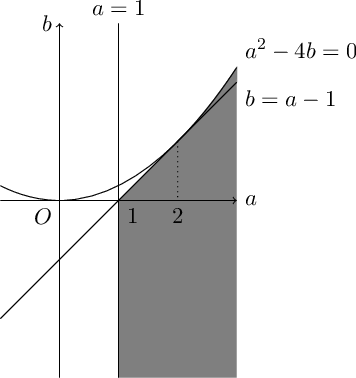}
    \caption{$D_{1}^{\prime }$}
    \label{fig:Dab1}
  \end{minipage}
\end{figure}
\end{rmk}

\begin{proof}[Proof of Theorem \ref{thm:monot2}]
First, we show the ``only if'' part of the proof. 
Namely, we assume $a^{2}-4b \geq 0$ and $|\alpha +\beta |=|a|\geq |\beta |$. 
Similarly to the proof of Theorem \ref{thm:monot2-2}, for any $n$, 
the inequality (\ref{eq:monot2}) is equivalent to (\ref{eq:another monot2}), that is, 
$$
\left|\frac{b}{\alpha }\frac{a_{n}}{a_{n+1}}\right|
   =
   \left|\beta \frac{\alpha ^{n+1}-\beta ^{n+1}}{\alpha ^{n+2}-\beta ^{n+2}}\right|\leq 1 .
$$
We consider the following five cases: 
\begin{align*}
&\textrm{(i)}\ |\alpha |=|\beta |; \qquad 
\textrm{(ii-1)}\ |\alpha |>|\beta |,\ \alpha >0,\ \beta > 0 ;\qquad 
\textrm{(ii-2)}\ |\alpha |>|\beta |,\ \alpha <0,\ \beta < 0 ;\\
&\textrm{(ii-3)}\ |\alpha |>|\beta |,\ \alpha >0,\ \beta < 0 ;\qquad 
\textrm{(ii-4)}\ |\alpha |>|\beta |,\ \alpha <0,\ \beta > 0 .
\end{align*}
(i)
Note that $\alpha =\beta \not=0$ by (\ref{eq:eq cond char root}). 
Since we have 
$$
a_{n}=a_{0}(n+1)\alpha ^{n}, \quad \frac{a_{n+1}}{a_{n}}=\frac{n+2}{n+1}\alpha , 
$$
we obtain 
\begin{align*}
\left|\alpha -\frac{a_{n+2}}{a_{n+1}}\right|
   =
   \frac{|\alpha |}{n+2}
   \leq 
   \frac{|\alpha |}{n+1}
   =
   \left|\alpha -\frac{a_{n+1}}{a_{n}}\right|
\end{align*}
for any non-negative integer $n$. 
\\
(ii-1)
In this case, we have $\alpha > \beta  >0$. 
The condition (\ref{eq:cond2}) is reduced into $\alpha +\beta \geq \beta $, and hence, we have 
\begin{align*}
|\alpha ^{n+2}-\beta ^{n+2}|-|\beta (\alpha ^{n+1}-\beta ^{n+1})|
   =
   \alpha ^{n+1}(\alpha -\beta )\geq 0.
\end{align*}
(ii-2)
The inequality 
(\ref{eq:another monot2})
can be shown similarly to (ii-1). 
\\
(ii-3)
We have $|\alpha |=\alpha >-\beta =|\beta |$ and $|\alpha +\beta |=\alpha +\beta \geq -\beta =|\beta |$. 
Thus, we obtain 
\begin{align}
|\alpha ^{n+2}-\beta ^{n+2}|-|\beta (\alpha ^{n+1}-\beta ^{n+1})|
   &=
   \alpha ^{n+1}(\alpha +\beta )-2\beta ^{n+2} \nonumber \\
\label{eq:key step}
   &=
   \alpha ^{n+1}(\alpha +2\beta )-\beta (\alpha ^{n+1}+2\beta ^{n+1}).
\end{align}
Since the inequality $\alpha ^{n+1}+2\beta ^{n+1}\geq 0$ holds for any non-negative integer $n$, 
the right-hand side of (\ref{eq:key step}) is non-negative. 
\\
(ii-4)
By setting $\alpha ^{\prime}:=-\alpha $ and $\beta ^{\prime}:=-\beta $, 
we can prove (\ref{eq:another monot2}) similarly to (ii-3). 

Next, we show the ``if'' part. 
The inequality (\ref{eq:another monot2}) for $n=0$ is reduced into 
$$
\left|\frac{\beta }{\alpha +\beta }\right|\leq 1 ,
$$
which implies $\alpha +\beta \not=0$ and $|\beta |\leq |\alpha +\beta |$. 
Further, by Theorem \ref{thm:monot2-2}, the inequality (\ref{eq:monot2}) holds for 
sufficiently large $n$ if and only if $a^{2}-4b\geq 0$ holds. 
\end{proof}

\begin{rmk}
As an analogy of Theorem \ref{thm:monot2}, we consider 
a necessary condition for arbitrary initial values: 
$a^{2}-4b\geq 0$ and 
$$
\left|\frac{\beta c_{0}}{c_{1}}\right|\leq 1 .
$$
However, it is not a sufficient condition. 
Indeed, if we set 
$$
c_{0}=1, \quad c_{1}=3, \quad \alpha =2.1, \quad \beta =-2 ,
$$
then we have $a_{2}=(2.1-2)3+4.2=4.5$ and 
$$
\left|\frac{\beta a_{0}}{a_{1}}\right|=\left|\frac{\beta c_{0}}{c_{1}}\right|=\frac{2}{3}\leq 1 ,\qquad 
\left|\frac{\beta a_{1}}{a_{2}}\right|=\frac{4}{3}\geq 1 .
$$
\end{rmk}

\begin{rmk}
\label{rmk:domain D2}
Two domains 
\begin{align}
\label{eq:domain D2}
D_{2}&:=\{(\alpha ,\beta ) \in \mathbb{R}^{2} \mid |\alpha +\beta |\geq |\beta |, |\alpha |\geq |\beta |\},
\\
\label{eq:domain D2prime}
D_{2}^{\prime }&:=\{(a,b) \in \mathbb{R}^{2} \mid |a|\geq |\beta |, |\alpha |\geq |\beta |\}
\end{align}
corresponding to the condition (\ref{eq:cond2}) of Theorem \ref{thm:monot2} 
are drawn as in Figures \ref{fig:D2} and \ref{fig:Dab2}, respectively. 

\begin{figure}[h]
  \begin{minipage}[b]{0.45\linewidth}
    \centering
    \includegraphics[keepaspectratio, scale=0.8]{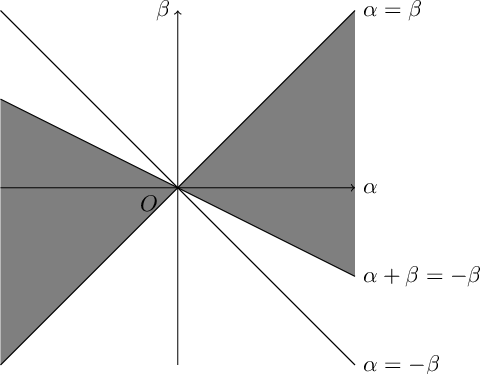}
    \caption{$D_{2}$}
    \label{fig:D2}
  \end{minipage}
  \begin{minipage}[b]{0.45\linewidth}
    \centering
    \includegraphics[keepaspectratio, scale=0.8]{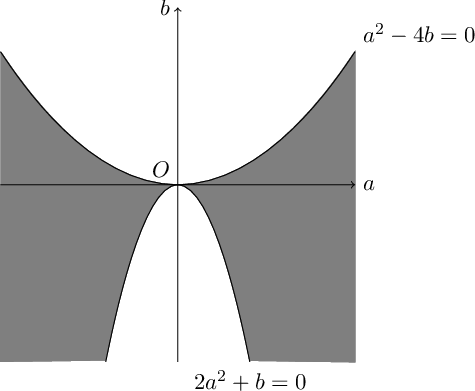}
    \caption{$D_{2}^{\prime }$}
    \label{fig:Dab2}
  \end{minipage}
\end{figure}
\end{rmk}

\begin{rmk}
\label{rmk:Ricacci}
Theorem \ref{thm:monot2} can be translated into the property of 
a solution to a difference Riccati equation with real constant coefficients: 
\begin{align}
\label{eq:diff Ricacci1}
b_{n+1}=\frac{xb_{n}+y}{zb_{n}+w}.
\end{align}
Indeed, by setting 
$$
b_{n}:=\frac{a_{n+1}}{a_{n}} 
$$
and dividing the difference equation (\ref{eq:two order diff eq}) by $a_{n+1}$, we obtain 
\begin{align}
\label{eq:diff Ricacci2}
b_{n+1}=a-b\frac{1}{b_{n}}=\frac{ab_{n}-b}{b_{n}} .
\end{align}
Theorem \ref{thm:monot2} gives a condition under which 
the solution of the difference Riccati equation (\ref{eq:diff Ricacci2}) 
with initial value
$$
b_{0}=\frac{a_{1}}{a_{0}}=a
$$
approaches to the fixed point monotonically.  
\end{rmk}

\begin{proof}[Proof of Theorem \ref{thm:monot3}]
For the solution $a_n$ to the difference equation (\ref{eq:two order diff eq}) with 
arbitrary initial values, we have 
\begin{align*}
\left|a_{n}\alpha -a_{n+1}\right|
   &=
   \left|c_{0}(h_{n}(\alpha ,\beta )\alpha -h_{n+1}(\alpha ,\beta ))+(c_{1}-ac_{0})(h_{n-1}(\alpha ,\beta )\alpha -h_{n}(\alpha ,\beta ))\right| \\
   &=
   \left|c_{0}\frac{\beta ^{n+2}-\alpha \beta ^{n+1}}{\alpha -\beta }
   +(c_{1}-ac_{0})\frac{\beta ^{n+1}-\alpha \beta ^{n}}{\alpha -\beta }\right| \\
   &=
   \left|-c_{0}\beta ^{n+1}
   -(c_{1}-ac_{0})\beta ^{n}\right| \\
   &=
   |\beta |^{n}
   \left|c_{0}\beta 
   +(c_{1}-ac_{0})\right| \\
   &=
   |c_{1}-c_{0}\alpha |
   |\beta |^{n}.
\end{align*}
If we assume 
\begin{align}
\left|a_{n}\alpha -a_{n+1}\right|-\left|a_{n+1}\alpha -a_{n+2}\right|
   &=
   |c_{1}-c_{0}\alpha |
   |\beta |^{n}
   -|c_{1}-c_{0}\alpha |
   |\beta |^{n+1} \nonumber \\
\label{eq:monotonic3 suf}
   &=
   |c_{1}-c_{0}\alpha |
   |\beta |^{n}   
   (1-|\beta |)\geq 0 ,
\end{align}
then we obtain $|\beta |\leq 1$. 
Conversely, if we assume $|\beta |\leq 1$, then 
the inequality (\ref{eq:monotonic3 suf}) holds. 
\end{proof}

\begin{rmk}
If we assume that the monotone property 1 (\ref{eq:monot1}) 
and the monotone property 3 (\ref{eq:monot3}) hold for any non-negative integer $n$, 
then the monotone property 2 (\ref{eq:monot2}) also holds, since we have
$$
\left|\alpha -\frac{a_{n+2}}{a_{n+1}}\right|\leq \left|\frac{a_{n}}{a_{n+1}}\right|\left|\alpha -\frac{a_{n+1}}{a_{n}}\right|\leq \left|\alpha -\frac{a_{n+1}}{a_{n}}\right|
$$
(see Proposition \ref{prop:domains}). 
However, the converse is not true. 
For example, 
although the solution in Example \ref{exa:count exam1} satisfies (\ref{eq:monot2}) and (\ref{eq:monot3}), 
it does not satisfy (\ref{eq:monot1}). 
\end{rmk}

\begin{rmk}
\label{rmk:Pisot}
We assume $a,b \in \mathbb{Z}$. 
Suppose that the characteristic polynomial $x^{2}-ax+b$ is irreducible over $\mathbb{Z}$. 
In this case, the condition (\ref{eq:cond3}) is $\beta \not=0$ and $|\beta |<1$, 
which implies that $\alpha $ is a quadratic integer greater than $1$ and is a Pisot number \cite{BDGPS}. 
Namely, 
the inequality (\ref{eq:monot3}) holds for any initial values $c_{0},c_{1}$ and for any $n$ 
if and only if $\alpha$ is a Pisot number. 
\end{rmk}

\begin{rmk}
\label{rmk:domain D3}
Two domains 
\begin{align}
\label{eq:domain D3}
D_{3}&:=\{(\alpha ,\beta ) \in \mathbb{R}^{2} \mid |\beta |\leq 1, |\alpha |\geq |\beta |\}
,\\
\label{eq:domain D3prime}
D_{3}^{\prime }&:=\{(a,b) \in \mathbb{R}^{2} \mid |\beta |\leq 1, |\alpha |\geq |\beta |\}
\end{align}
corresponding the condition (\ref{eq:cond3}) of Theorem \ref{thm:monot3}
are drawn as in Figures \ref{fig:D3} and \ref{fig:Dab3}, respectively. 

\begin{figure}[h]
  \begin{minipage}[b]{0.45\linewidth}
    \centering
    \includegraphics[keepaspectratio, scale=0.8]{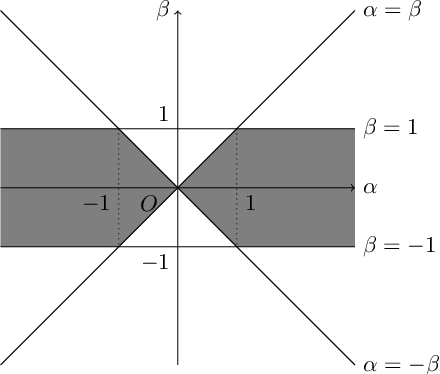}
    \caption{$D_{3}$}
    \label{fig:D3}
  \end{minipage}
  \begin{minipage}[b]{0.45\linewidth}
    \centering
    \includegraphics[keepaspectratio, scale=0.8]{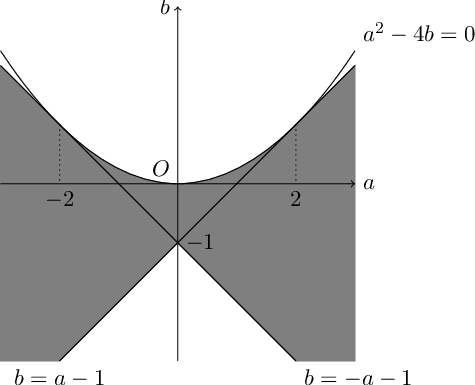}
    \caption{$D_{3}^{\prime }$}
    \label{fig:Dab3}
  \end{minipage}
\end{figure}
\end{rmk}

\section{Concluding remarks}\label{section:concluding}
One of the motivations of our study is 
to characterize the Fibonacci numbers by monotone properties. 
As a generalization of the Fibonacci numbers, 
the solution $a_n$ to the difference equation (\ref{eq:two order diff eq}), 
especially that with initial values $a_{-1}:=0$, $a_{0}:=1$ (i.e., 
the special values of the complete homogeneous symmetric polynomials), 
has been studied well (e.g., \cite{K}). 
On the other hand, in previous studies, it seems that 
the monotone properties 1--3  
were not paid attention to. 
By our main results, we determine integer solutions $a_n$ satisfying 
the monotone properties (\ref{eq:monot1}), (\ref{eq:monot2-2}) and (\ref{eq:monot3}) which are analogies of 
the inequalities (\ref{eq:monotonic 1}), (\ref{eq:monotonic 2}) and (\ref{eq:monotonic 3}) for the Fibonacci numbers.

Indeed, 
it suffices to restrict the characteristic roots (resp. the coefficients) of (\ref{eq:two order diff eq})
to the intersection 
$D:=D_{1}\cap D_{2}\cap D_{3}$ (resp. $D^{\prime}:=D_{1}^{\prime }\cap D_{2}^{\prime }\cap D_{3}^{\prime }$) of 
the domains $D_{1}$, $D_{2}$ and $D_{3}$ (resp. $D_{1}^{\prime }$, $D_{2}^{\prime }$ and $D_{3}^{\prime }$)
in Remarks \ref{rmk:domain D1}, \ref{rmk:domain D2} and \ref{rmk:domain D3}. 
By easy calculations, we obtain the following proposition. 
\begin{prop}
\label{prop:domains}
We have
\begin{align}
D&:=D_{1}\cap D_{2}\cap D_{3}
   =
   \{(\alpha ,\beta ) \in \mathbb{R}^{2} \mid \alpha +\beta  \geq 1, \, \alpha \geq 1, \, |\beta |\leq 1\}, \\
D^{\prime}&:=D_{1}^{\prime }\cap D_{2}^{\prime }\cap D_{3}^{\prime }
   =
   \{(a ,b ) \in \mathbb{R}^{2} \mid a \geq 1, \, a-1 \geq b\geq -a-1\}.
\end{align}
\begin{figure}[h]
  \begin{minipage}[b]{0.45\linewidth}
    \centering
    \includegraphics[keepaspectratio, scale=0.8]{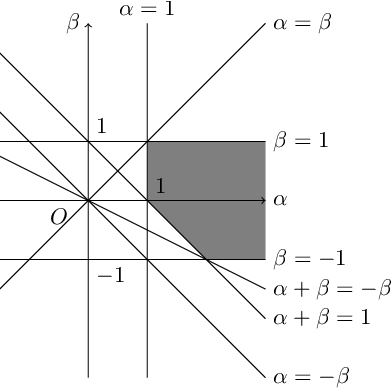}
    \caption{$D$}
  \end{minipage}
  \begin{minipage}[b]{0.45\linewidth}
    \centering
    \includegraphics[keepaspectratio, scale=0.8]{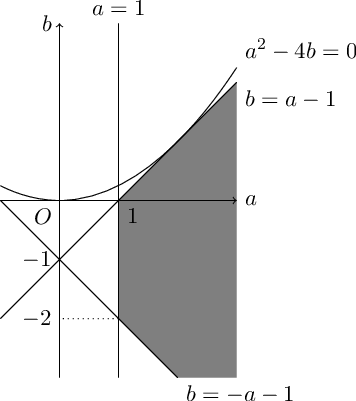}
    \caption{$D^{\prime}$}
  \end{minipage}
\end{figure}
\end{prop}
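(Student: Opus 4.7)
The plan is to verify both equalities directly, handling the first on the level of $(\alpha,\beta)$ and the second by translating the conditions into $(a,b)$ via Vieta.

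For $D=D_{1}\cap D_{2}\cap D_{3}$ the proof is essentially an unpacking of definitions. For the nontrivial inclusion, I would take $(\alpha,\beta)$ with $\alpha+\beta\geq 1$, $\alpha\geq 1$, $|\beta|\leq 1$, and observe that $|\alpha|=\alpha\geq 1\geq|\beta|$ handles the $|\alpha|\geq|\beta|$ part of all three domains, while $|\alpha+\beta|=\alpha+\beta\geq 1\geq|\beta|$ gives the remaining condition of $D_{2}$. The reverse inclusion is tautological: $\alpha+\beta\geq 1$ and $\alpha\geq 1$ come from $D_{1}$, and $|\beta|\leq 1$ from $D_{3}$.

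For the $(a,b)$-description, I would set $p(x):=x^{2}-ax+b$ with roots $\alpha_{\pm}=(a\pm\sqrt{a^{2}-4b})/2$. Membership in $D_{1}'$ requires $\alpha\in\mathbb{R}$, hence $a^{2}-4b\geq 0$. Given $a\geq 1>0$, Lemma \ref{thm:charact roots ineq} shows $|\alpha_{+}|\geq|\alpha_{-}|$ in both subcases $b>0$ and $b<0$, so $\alpha=\alpha_{+}$ and $\beta=\alpha_{-}$. The remaining content is then the equivalence
\[
\bigl\{\alpha_{+}\geq 1,\ |\alpha_{-}|\leq 1\bigr\}\Longleftrightarrow\bigl\{-a-1\leq b\leq a-1\bigr\}.
\]
The forward direction uses that $|\alpha_{-}|\leq 1$ combined with $\alpha_{+}\geq 1$ forces $\alpha_{-}\leq 1\leq \alpha_{+}$, i.e., $p(1)=1-a+b\leq 0$, and $\alpha_{-}\geq -1$, i.e., $p(-1)=1+a+b\geq 0$. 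The reverse direction reverses the same sign analyses, using that both $\alpha_{\pm}$ are real and $\alpha_{+}\geq a/2>0$. Finally, I would note that $b\leq a-1$ with $a\geq 1$ automatically implies $a^{2}-4b\geq a^{2}-4a+4=(a-2)^{2}\geq 0$, so the discriminant condition needs no separate bookkeeping once the $b$-bounds are imposed.

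The main obstacle is the case-by-case identification of $\alpha$ with $\alpha_{+}$ under the convention $|\alpha|\geq|\beta|$, which is the content of Lemma \ref{thm:charact roots ineq}. Once that identification is in hand, everything reduces to reading off the signs of $p(\pm 1)$ in terms of $a,b$, which is elementary.
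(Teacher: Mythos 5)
Your proof is correct. The paper gives no argument for this proposition (it is stated as following ``by easy calculations''), and your write-up supplies exactly those calculations in the natural way: the redundancy of the $|\alpha|\geq|\beta|$ and $|\alpha+\beta|\geq|\beta|$ conditions, the identification $\alpha=\alpha_{+}$, $\beta=\alpha_{-}$ for $a\geq 1$ via Lemma \ref{thm:charact roots ineq} (the same one-line remark also disposes of the $|a|\geq|\beta|$ condition of $D_{2}^{\prime}$, which is automatic from $a\geq1$ and $|\beta|\leq1$), and the translation of the root conditions into the signs of $p(1)$ and $p(-1)$ together with the observation that $b\leq a-1$ already forces $a^{2}-4b\geq(a-2)^{2}\geq0$.
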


Further, we note the following lemma. 
\begin{lem}
Let $(a,b) \in \mathbb{Z}^{2}\cap D^{\prime}$. 
Then, we have the following equivalence: 
\begin{align}
\label{eq:irr cond}
x^{2}-ax+b\text{ is irreducible over }\mathbb{Z} \quad \Leftrightarrow  \quad b\not=-a-1, \, 0, \, a-1.
\end{align}
\end{lem}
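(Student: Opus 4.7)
The plan is to reduce irreducibility over $\mathbb{Z}$ of a monic integer quadratic to the non-existence of integer roots, and then to exploit the very restrictive constraint $|\beta|\leq 1$ that $D'$ places on the smaller characteristic root.

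First, I would observe that the discriminant $a^{2}-4b$ is automatically nonnegative on $D'$: from $b\leq a-1$ one gets $a^{2}-4b\geq a^{2}-4a+4=(a-2)^{2}\geq 0$, so the characteristic roots $\alpha\geq\beta$ are real throughout $D'$. By Proposition \ref{prop:domains} (transferred to $(a,b)$-coordinates via Vieta's relations $\alpha+\beta=a$, $\alpha\beta=b$), the defining conditions of $D'$ are equivalent to $\alpha\geq 1$ and $|\beta|\leq 1$. Since $x^{2}-ax+b$ is a monic polynomial with integer coefficients, it is reducible over $\mathbb{Z}$ if and only if it has an integer root, and using that $\alpha+\beta=a$ is already integral, this is in turn equivalent to $\alpha,\beta\in\mathbb{Z}$.

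For the direction ``reducible $\Longrightarrow$ $b\in\{-a-1,0,a-1\}$'', I assume $\alpha,\beta\in\mathbb{Z}$. Then $\beta$ is an integer with $|\beta|\leq 1$, so $\beta\in\{-1,0,1\}$. Applying Vieta's formulas case by case yields $b=0$ (when $\beta=0$), $b=a-1$ (when $\beta=1$, whence $\alpha=a-1$), and $b=-(a+1)$ (when $\beta=-1$, whence $\alpha=a+1$).

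The converse is immediate from the three explicit factorizations
\[
x^{2}-ax = x(x-a), \qquad x^{2}-ax+(a-1) = (x-1)\bigl(x-(a-1)\bigr), \qquad x^{2}-ax-(a+1) = (x+1)\bigl(x-(a+1)\bigr),
\]
each of which exhibits a nontrivial integer factorization (including the degenerate case $a=2$, $b=1$, where $(x-1)^{2}$ is still reducible). I do not anticipate any genuine obstacle here: once the discriminant calculation pins $D'$ inside the real-roots regime, the whole argument collapses to enumerating the at most three integer values that $D'$ leaves open for the smaller root $\beta$.
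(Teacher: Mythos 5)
Your proposal is correct and follows essentially the same route as the paper: the converse direction is handled by the same three explicit factorizations, and the forward direction in both cases reduces reducibility to the existence of integer roots and then uses the description of $D'$ from Proposition \ref{prop:domains} to force the smaller root into $\{-1,0,1\}$, enumerating $b=0$, $a-1$, $-a-1$. The only cosmetic difference is that you phrase the key step via $|\beta|\leq 1$ and Vieta, while the paper writes the factorization as $(x-k)(x-(a-k))$ and concludes $|k|\leq 1$ or $|a-k|\leq 1$; your added check that the discriminant is nonnegative on $D'$ is a harmless (and correct) explicit justification of a point the paper leaves implicit.
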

\begin{proof}
It is obvious that if $b=-a-1, \, 0, \, a-1$, 
the polynomial $x^{2}-ax+b$ is reducible over $\mathbb{Z}$. 
We prove the converse. Assume that $x^{2}-ax+b$ is reducible over $\mathbb{Z}$. 
Then, there exists an integer $k$ such that 
$$
x^{2}-ax+b=(x-k)(x-a+k).
$$
By 
$(k,a-k) \in \mathbb{Z}^{2}\cap D'$ or $(a-k,k) \in \mathbb{Z}^{2}\cap D'$, 
Proposition \ref{prop:domains} implies that $|a-k|\leq 1$ or $|k|\leq 1$ holds. 
Therefore, we obtain $k=a-1, a, a+1$ or $k=-1,0,1$, and hence, 
the value of $b=k(a-k)$ takes only $b=-a-1,0,a-1$. 
\end{proof}

We list difference equations (\ref{eq:two order diff eq}) such that 
the coefficients $(a,b) \in \mathbb{Z}^{2}\cap D^{\prime}$ satisfy the condition (\ref{eq:irr cond}): 
\begin{align*}
a=1\,&:\,a_{n+2}=a_{n+1}+a_{n}, \\
a=2\,&:\,a_{n+2}=2a_{n+1}+2a_{n}, \,a_{n+2}=2a_{n+1}+a_{n},\\
a=3\,&:\,a_{n+2}=3a_{n+1}+3a_{n}, \,a_{n+2}=3a_{n+1}+2a_{n}, \,a_{n+2}=3a_{n+1}+a_{n}, \,a_{n+2}=3a_{n+1}-a_{n},\\
\vdots &
\end{align*}
The first equation is the recursion of the Fibonacci numbers. 
The second equation in $a=2$ is the recursion of the Pell numbers.
The recursion of the Fibonacci numbers is characterized by 
\begin{align*}
  \{ (a,b) \in \mathbb{Z}^2 \cap \partial D'  \mid \text{$(a,b)$ satisfies (\ref{eq:irr cond})} \}
  =\{ (1,-1) \},
\end{align*}
where $\partial D'$ means the boundary of the domain $D'$. 
From the viewpoint of the monotone properties and irreducibility over $\mathbb{Z}$,
the Fibonacci numbers are very special.
Thus, the above recursions with initial values $a_{-1}:=0$, $a_{0}:=1$
give appropriate generalizations of the Fibonacci numbers.

\begin{Acknowledgement}
The authors are grateful to Professor Shigeki Akiyama who gave an opportunity 
to consider Theorem \ref{thm:monot3}. 
We thank Professor Kentaro Mitsui for pointing out the interpretation in Remark \ref{rmk:Ricacci}. 
This work was supported by JST CREST Grant Number JP19209317 and JSPS KAKENHI Grant Number 21K13808. 
\end{Acknowledgement}

\end{document}